\colorlet{darkblue}{blue!55!black}
\colorlet{darkcyan}{cyan!50!black}
\colorlet{darkgreen}{green!60!black}
\def\eqref#1{\textcolor{darkblue}{(\ref{#1})}}
\let\oldequation\equation
\let\oldendequation\endequation
\renewenvironment{equation}{\linenomathNonumbers\oldequation}{\oldendequation\endlinenomath}
\let\expandafter\oldequationstar\csname equation*\endcsname
\let\expandafter\oldendequationstar\csname endequation*\endcsname
\renewenvironment{equation*}{\linenomathNonumbers\oldequationstar}{\oldendequationstar\endlinenomath}
\let\oldalign\align
\let\oldendalign\endalign
\let\expandafter\oldalignstar\csname align*\endcsname
\let\expandafter\oldendalignstar\csname endalign*\endcsname
\renewenvironment{align*}{\linenomathNonumbers\oldalignstar}{\oldendalignstar\endlinenomath}
\newcounter{intro}
\newcounter{result}
\theoremstyle{definition}
\newtheorem{theorem}[result]{Theorem}
\newtheorem{lemma}[result]{Lemma}
\newtheorem{proposition}[result]{Proposition}
\newtheorem{corollary}[result]{Corollary}
\newtheorem{definition}[result]{Definition}
\newtheorem{example}[result]{Example}
\newtheorem{remark}[result]{Remark}
\newtheorem{notation}[result]{Notation}
\newtheorem{setup}[result]{Setup}
\newtheorem{disclaimer}[result]{Disclaimer}
\newtheorem*{ack}{Acknowledgements}
\numberwithin{equation}{section}
\numberwithin{result}{section}
\title[Approximation and Rouquier dimension]{Approximation by perfect complexes detects Rouquier dimension}
\author[P.~Lank]{Pat Lank}
\address{P.~Lank,
Department of Mathematics,
University of South Carolina, 
Columbia, SC 29208,
U.S.A.}
\email{plankmathematics@gmail.com}
\author[N.~Olander]{Noah Olander}
\address{N.~Olander,
Korteweg-de Vries Institute for Mathematics,
University of Amsterdam, 
Science Park 105-107, 1098 XG,
Amsterdam, Netherlands}
\email{n.b.olander@uva.nl}
\date{\today}
\keywords{derived categories, bounded $t$-structures, approximation by perfect complexes, Rouquier dimension, strong generators, coherent sheaves, derived splinters, \'{e}tale morphisms}
\subjclass[2020]{14A30 (Primary), 14F08, 13D09, 18G80, 14B05} 
\begin{document}

\begin{abstract}
    This work explores bounds on the Rouquier dimension in the bounded
    derived category of coherent sheaves on Noetherian schemes. 
    By utilizing approximations, we exhibit that Rouquier dimension is
    inherently characterized by the number of cones required to 
    build all perfect complexes. We use this to prove sharper bounds on Rouquier dimension
    of singular schemes.
    Firstly, we show Rouquier dimension doesn't go up along \'{e}tale extensions and is invariant under \'{e}tale covers of affine schemes admitting a dualizing complex. Secondly, we demonstrate that the Rouquier dimension of the
    bounded derived category for a curve, with a delta
    invariant of at most one at closed points, is no larger than two. Thirdly,
    we bound the Rouquier dimension for the bounded derived category of a
    (birational) derived splinter variety by that of a resolution of singularities.
\end{abstract}

\maketitle

\section{Introduction}
\label{sec:intro}

Our work establishes a technical result concerning retracts of good approximations in triangulated categories with a bounded $t$-structure, see \Cref{lem:summandofapprox}. This result provides a valuable perspective for proving sharp bounds on the Rouquier dimension of triangulated categories of interest in algebraic geometry.

The concept of generation in a triangulated category $\mathcal{T}$ was initially introduced in \cite{BVdB:2003}. For an object $G$ of $\mathcal{T}$, we denote $\langle G \rangle_n$ as the smallest full subcategory of $\mathcal{T}$ generated from $G$ using finite coproducts, shifts, retracts, and at most $n$ cones. The \textit{Rouquier dimension} of $\mathcal{T}$ is the minimal integer $n$ such that there exists an object $G$ of $\mathcal{T}$ satisfying $\langle G \rangle_n = \mathcal{T}$; any object such that $\langle G \rangle_n = \mathcal{T}$ for some $n$ is called a \textit{strong generator}. More generally, an object $G$ is called a \textit{classical generator} if $\bigcup^{\infty}_{n=0} \langle G \rangle_n = \mathcal{T}$. For additional background, refer to \Cref{def:strong_generators}. 

Consider a Noetherian scheme $X$, and let $D^b_{\operatorname{coh}}(X)$ denote the derived category of bounded complexes with coherent cohomology. There has been progress made regarding when $D^b_{\operatorname{coh}}(X)$ admits a strong or classical generators \cite{Neeman:2021, Aoki:2021, Jatoba:2021,DeDeyn/Lank/ManaliRahul:2024a,DeDeyn/Lank/ManaliRahul:2024b}, as well as on how to identify such objects explicitly \cite{BILMP:2023, Olander:2023, Rouquier:2008, Orlov:2009, Lank:2023}. The applications of generation for $D^b_{\operatorname{coh}}(X)$ have geometrically flavored consequences too. This includes detecting openness of the regular locus for $X$ \cite{Dey/Lank:2024a, Iyengar/Takahashi:2019}, as well as characterizations for singularities arising in birational geometry \cite{Lank/Venkatesh:2024, Lank/McDonald/Venkatesh:2025}.

For a smooth variety $X$ over a field, the Rouquier dimension of $D^b_{\operatorname{coh}}(X)$ is bounded above by $2 \dim X$ \cite{Rouquier:2008, Olander:2022}. Initially formulated for smooth quasi-projective varieties in \cite{Orlov:2009}, it is conjectured that the Rouquier dimension of $D^b_{\operatorname{coh}}(X)$ for a smooth variety $X$ coincides with $\dim X$. This expectation has been verified in various instances \cite{Rouquier:2008, Ballard/Favero:2012, Orlov:2009, Ballard/Favero/Katzarkov:2014, Hanlon/Hicks/Lazarev:2023, Pirozhkov:2023}.

Calculating the Rouquier dimension of $D^b_{\operatorname{coh}}(X)$ can be challenging. However, when certain mild assumptions on $X$ hold, powerful tools become available. For example, if $X$ is a projective scheme over a commutative Noetherian ring, the converse coghost lemma \cite{OS:2012} offers a valuable method. This lemma facilitates the translation of questions about generation into concerns regarding the vanishing of compositions of specific maps in the bounded derived category. 

In \cite{OS:2012}, the converse coghost lemma was used to show that for any projective scheme $X$ over a commutative Noetherian ring, any strongly generated strictly full triangulated subcategory of $D^b_{\operatorname{coh}}(X)$ that contains every perfect complex must coincide with $D^b_{\operatorname{coh}}(X)$, see  \cite[Theorem 2]{OS:2012}. We extend this here to arbitrary Noetherian schemes and additionally prove that if every perfect complex can be built in at most $d$ cones from a given object, then so can $D^b_{\operatorname{coh}}(X)$. As shown in \cite{Lipman/Neeman:2007}, every object $E$ in $D^b_{\operatorname{coh}}(X)$ has a good approximation by objects in the subcategory $\operatorname{perf}X$, which consists of perfect complexes on $X$. The subsequent result is our first application of \Cref{lem:summandofapprox}.

\begin{theorem}\label{thm:generatingperf}
    Suppose $X$ is a Noetherian scheme. 
    Let $G$ be an object of $D^b_{\operatorname{coh}}(X)$ and $d$ a non-negative integer. The following are equivalent:
    \begin{enumerate}
        \item $D^b_{\operatorname{coh}}(X) = \langle G \rangle_d$
        \item $\operatorname{perf} X \subseteq \langle G \rangle_d$. 
    \end{enumerate}
\end{theorem}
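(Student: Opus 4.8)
The plan is to prove the two implications separately, with the forward one being formal and the reverse one carrying all the content through $\Cref{lem:summandofapprox}$. The implication $(1)\Rightarrow(2)$ needs nothing: every perfect complex is an object of $D^b_{\operatorname{coh}}(X)$, so if $D^b_{\operatorname{coh}}(X)=\langle G\rangle_d$ then $\operatorname{perf}X\subseteq D^b_{\operatorname{coh}}(X)=\langle G\rangle_d$ automatically. The whole point is therefore $(2)\Rightarrow(1)$.

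For $(2)\Rightarrow(1)$, fix an arbitrary object $E$ of $D^b_{\operatorname{coh}}(X)$; since $\langle G\rangle_d\subseteq D^b_{\operatorname{coh}}(X)$ holds trivially, it suffices to show $E\in\langle G\rangle_d$. I would equip $D^b_{\operatorname{coh}}(X)$ with its standard $t$-structure, whose heart is $\operatorname{Coh}(X)$ and which is bounded because $X$ is Noetherian and every object has only finitely many nonzero cohomology sheaves. By the approximation result of Lipman--Neeman \cite{Lipman/Neeman:2007}, $E$ admits good approximations by perfect complexes. Hypothesis $(2)$ says exactly that these approximating perfect complexes already lie in $\langle G\rangle_d$, and $\langle G\rangle_d$ is by its very definition closed under shifts, finite coproducts, and retracts. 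These are precisely the ingredients $\Cref{lem:summandofapprox}$ is designed to consume: it upgrades the statement ``$E$ is approximable by objects of $\langle G\rangle_d$'' to ``$E$ is a retract of a good approximation lying in $\langle G\rangle_d$,'' and hence, by retract-closure, to $E\in\langle G\rangle_d$. Ranging over all $E$ then yields $D^b_{\operatorname{coh}}(X)=\langle G\rangle_d$.

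The step I expect to carry the real difficulty, and the reason the bounded $t$-structure is indispensable rather than cosmetic, is the uniformity of the bound $d$. Producing approximations whose error is concentrated ever farther from the cohomological window of $E$ ordinarily costs additional cones, so one cannot simply build $E$ through an approximation chain and hope to remain inside $\langle G\rangle_d$. A naive attempt to split an approximation triangle $A\to E\to B$ also fails in general: when $X$ is singular it has infinite global dimension, so driving the cone $B$ into low degrees does not force $\operatorname{Hom}(E,B)=0$. What survives is orthogonality \emph{across} the $t$-structure: once the error is arranged to sit strictly above the cohomology of $E$, the vanishing $\operatorname{Hom}(E,B)=0$ holds for formal reasons (it is a $t$-structure axiom, insensitive to the singularities of $X$), the triangle splits, and $E$ becomes a genuine retract of $A\in\langle G\rangle_d$. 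Isolating this retract while pinning the cone count at $d$ is exactly what $\Cref{lem:summandofapprox}$ encapsulates; the classical route to the same conclusion passes instead through the converse coghost lemma of \cite{OS:2012}, and the main thing to check in the present approach is that the good approximations interact correctly with the bounded $t$-structure so that the lemma applies.
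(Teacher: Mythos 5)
Your overall strategy matches the paper's: the implication $(1)\Rightarrow(2)$ is trivial, and $(2)\Rightarrow(1)$ is reduced, via approximation by perfect complexes \cite{Lipman/Neeman:2007}, to \Cref{lem:summandofapprox}. But your description of what that lemma delivers is wrong in a way that breaks the argument. You claim the approximation triangle $A \to E \to B$ splits and ``$E$ becomes a genuine retract of $A \in \langle G\rangle_d$,'' where $A$ is the perfect approximation of $E$. This cannot happen: $\operatorname{perf}X$ is a thick subcategory, so a retract of a perfect complex is perfect, while for singular $X$ most objects of $D^b_{\operatorname{coh}}(X)$ are not. The splitting you invoke also relies on placing the error ``strictly above the cohomology of $E$'' so that $\operatorname{Hom}(E,B)=0$ by the $t$-structure axiom; but approximation by perfect complexes concentrates the cone in arbitrarily \emph{low} degrees, not high ones, and, as you yourself observe, in that situation $\operatorname{Hom}(E,B)$ need not vanish when $X$ is singular.

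What the paper actually does is different, and you are missing its key step. Since $P \in \langle G\rangle_d$, the object $P$ is a retract of some $F \in \operatorname{coprod}_d\big(G(-\infty,\infty)\big)$ (\Cref{rmk:retractsofcoprod}); the object of which $E$ is eventually exhibited as a retract is not $P$ but a modification $F'$ of this $F$. Producing $F'$ is the content of \Cref{lem:approxcoprod}: by induction on $d$ one constructs a map $F \to F'$ which is an isomorphism in degrees $\geq a$, with $F'$ still lying in $\operatorname{coprod}_d\big(G(-\infty,\infty)\big)$ and in $\mathcal{T}^{\geq a-N}$ for an $N=N(d)$ depending only on $d$ and the amplitude of $G$. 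This is where the uniformity of the bound $d$, which you rightly flag as the crux, is actually secured, and your proposal says nothing about it. \Cref{lem:summandofapprox} then transfers the retraction $P \to F \to P$ to a retraction $E \to F' \to E$ using the two Hom-isomorphisms supplied by \Cref{lem:samehoms}; no triangle is split anywhere. Without the intermediate step of \Cref{lem:approxcoprod}, knowing only that $P \in \langle G\rangle_d$ gives no control on the cohomological position of $F$, and the hypotheses of \Cref{lem:summandofapprox} cannot be met.
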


\Cref{thm:generatingperf} serves as a special case of a broader statement that we establish later regarding triangulated categories with a bounded $t$-structure; refer to \Cref{prop:approximation_bounded_t_structure} for details. For this reason, our result holds more generally for Noetherian algebraic spaces or left/right Noetherian non-commutative rings since approximation by perfect complexes also holds in these contexts. Our proof strategy crucially doesn't impose any $\operatorname{hom}$-finiteness assumptions on the triangulated category as was required in \cite[Theorem 2]{OS:2012}, which allows the extension from projective schemes to more general ones. \Cref{thm:generatingperf} can be applied to produce good bounds on Rouquier dimension of singular varieties. For instance, we use it to reprove \cite[Corollary 7.2]{Burban/Drozd:2017} which says that the bounded derived category of a curve with only nodes and cusps as singularities has Rouquier dimension $\leq 2$, see \Cref{cor:sharp_bound_rouquier_dimension_curves}. In what follows is a different flavor of applications of \Cref{lem:summandofapprox}.

\begin{theorem}\label{thm:summandofpullpush}
    Let $X$ be a Noetherian scheme. 
    \begin{enumerate}
        \item If $f \colon  U\to X$ is a quasi-compact separated \'{e}tale morphism\footnote{These hypotheses will ensure that $U$ is Noetherian.}, then every object of $D^b_{\operatorname{coh}}(U)$ is a direct summand of $\mathbb{L} f^\ast$ of an object of $D^b_{\operatorname{coh}}(X)$. In particular, one has
        \begin{displaymath}
            \dim D^b_{\operatorname{coh}}(U) \leq \dim D^b_{\operatorname{coh}}(X).
        \end{displaymath}
        \item If $f  \colon  Y \to X$ is proper and the natural map $\mathcal{O}_X \to\mathbb{R} f_\ast \mathcal{O}_Y$ splits in $D^b_{\operatorname{coh}}(X)$, then every object $A$ of $D^b_{\operatorname{coh}}(X)$ is a direct summand of $\mathbb{R} f_\ast E$ for $E$ an object of $D^b_{\operatorname{coh}}(Y)$. In particular, one has
        \begin{displaymath}
            \dim D^b_{\operatorname{coh}}(X) \leq \dim D^b_{\operatorname{coh}}(Y).
        \end{displaymath}
    \end{enumerate}
\end{theorem}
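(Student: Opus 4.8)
The plan is to treat the two parts separately, in each case producing the desired direct summand by an explicit construction and then correcting a failure of finiteness by a $t$-structure argument; the stated bounds on $\dim$ will follow formally. The formal part is the same in both cases: an exact functor $F$ of triangulated categories carries $\langle G\rangle_d$ into $\langle FG\rangle_d$, and $\langle -\rangle_d$ is closed under direct summands, so once the summand statements are proved the inequalities are immediate. For (1), $\mathbb{L}f^\ast=f^\ast$ is exact and preserves $D^b_{\operatorname{coh}}$ since $f$ is flat, so a generator $G$ with $\langle G\rangle_d=D^b_{\operatorname{coh}}(X)$ gives $\langle f^\ast G\rangle_d=D^b_{\operatorname{coh}}(U)$; for (2), $\mathbb{R}f_\ast$ preserves $D^b_{\operatorname{coh}}$ since $f$ is proper, so $\mathbb{R}f_\ast G$ generates $D^b_{\operatorname{coh}}(X)$ in the same number of cones.

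For part (2) the main input is the projection formula. Since $f$ is proper, $\mathbb{R}f_\ast\mathbb{L}f^\ast A\cong A\otimes^{\mathbb{L}}_{\mathcal{O}_X}\mathbb{R}f_\ast\mathcal{O}_Y$ for every $A$ in $D^b_{\operatorname{coh}}(X)$. Applying the exact functor $A\otimes^{\mathbb{L}}_{\mathcal{O}_X}(-)$ to the assumed splitting of $\mathcal{O}_X\to\mathbb{R}f_\ast\mathcal{O}_Y$ exhibits $A$ as a direct summand of $\mathbb{R}f_\ast\mathbb{L}f^\ast A$. The difficulty is that $\mathbb{L}f^\ast A$ need not be bounded below, so it is not yet an object of $D^b_{\operatorname{coh}}(Y)$. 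I would fix this by setting $E\colonequals\tau^{\geq -n}\mathbb{L}f^\ast A$, which lies in $D^b_{\operatorname{coh}}(Y)$, and showing that for $n\gg 0$ the retraction $\mathbb{R}f_\ast\mathbb{L}f^\ast A\to A$ descends to $\mathbb{R}f_\ast E$. Concretely, applying $\mathbb{R}f_\ast$ to the triangle $\tau^{<-n}\mathbb{L}f^\ast A\to\mathbb{L}f^\ast A\to E$, the left term lies in $D^{\leq -n+N}$ where $N$ is the finite cohomological dimension of $\mathbb{R}f_\ast$; once $-n+N$ is below the bottom degree of $A$ there are no maps from it to $A$, so the retraction factors through $\mathbb{R}f_\ast E$ and exhibits $A$ as a summand.

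For part (1) the geometric input is that for a separated étale $f$ the diagonal $\Delta\colon U\to U\times_X U$ is both open and closed, giving $U\times_X U=U\sqcup W$ with $U$ the diagonal component. Flat base change along the cartesian square identifies $f^\ast\mathbb{R}f_\ast E$ with $\mathbb{R}p_{1\ast}p_2^\ast E$, whose diagonal summand is exactly $E$, since both projections restrict to the identity on $\Delta(U)$; this realizes every $E$ in $D^b_{\operatorname{coh}}(U)$ as a direct summand of $f^\ast\mathbb{R}f_\ast E$. The obstruction is now the reverse one: since $f$ is not proper, $M\colonequals\mathbb{R}f_\ast E$ lies only in $D^b_{\operatorname{Qcoh}}(X)$. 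Because $X$ is Noetherian I would write $M$ as a filtered homotopy colimit of a uniformly bounded system $M_\alpha$ in $D^b_{\operatorname{coh}}(X)$, apply the exact, colimit-preserving functor $f^\ast$, factor the section $E\to f^\ast M$ through some $f^\ast M_\alpha$, and compose the retraction $f^\ast M\to E$ with $f^\ast M_\alpha\to f^\ast M$ to realize $E$ as a summand of the coherent object $f^\ast M_\alpha$.

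The main obstacle is this last factorization, and it is the point at which the approximation machinery behind \Cref{lem:summandofapprox} enters: one must know that $\operatorname{Hom}_{D(U)}(E,-)$ commutes with the filtered homotopy colimit $\operatorname{hocolim}_\alpha f^\ast M_\alpha$. I would reduce, via the truncation triangles on $E$ and the long exact sequences, to the case $E=\mathcal{F}[0]$ for a coherent sheaf $\mathcal{F}$, and then use the hypercohomology spectral sequence $\operatorname{Ext}^p(\mathcal{F},H^q(f^\ast M_\alpha))\Rightarrow\operatorname{Hom}(\mathcal{F},f^\ast M_\alpha[p+q])$: uniform boundedness of the system bounds the range of $q$, so only finitely many terms contribute in each total degree, and each $\operatorname{Ext}^p(\mathcal{F},-)$ commutes with filtered colimits of quasi-coherent sheaves since $\mathcal{F}$ is coherent on the Noetherian scheme $U$. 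This gives the factorization and completes the argument; the same circle of ideas, retracts of good approximations under a bounded $t$-structure, is what makes the coherence corrections in both parts uniform and is why the theorem is an application of \Cref{lem:summandofapprox}.
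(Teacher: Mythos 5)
Your proof is correct, and the two halves compare differently with the paper. Part (2) is essentially the paper's own argument: the paper likewise starts from the projection-formula splitting $A \to \mathbb{R}f_\ast\mathbb{L}f^\ast A \to A$, replaces $\mathbb{L}f^\ast A$ by its truncation $\tau_{\geq a-N}\mathbb{L}f^\ast A$ with $N$ a bound on the cohomological dimension of $\mathbb{R}f_\ast$, and descends the retraction; your factorization via the long exact sequence of the truncation triangle is the same vanishing that the paper packages as \Cref{lem:samehoms}. Part (1) is where you genuinely diverge. The paper writes $E = \tau_{\geq a}P$ with $P$ perfect (approximation by perfect complexes), invokes Balmer's theorem (\Cref{thm:balmer}) to realize $P$ as a summand of $\mathbb{L}f^\ast Q$ with $Q$ perfect on $X$, and then simply applies $\tau_{\geq a}$, using that $f^\ast$ is exact so $\tau_{\geq a}\mathbb{L}f^\ast Q = \mathbb{L}f^\ast\tau_{\geq a}Q$. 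You instead rerun the proof of Balmer's theorem directly at the level of $D^b_{\operatorname{coh}}$: the same diagonal-summand observation (\Cref{lem:summandofsomething}), but with $\mathbb{R}f_\ast E$ written as a uniformly bounded filtered homotopy colimit of objects of $D^b_{\operatorname{coh}}(X)$ rather than of perfect complexes (\Cref{lem:hocolimperf}), and with the factorization of the section obtained from the fact that $\operatorname{Hom}(E,-)$ commutes with such colimits for pseudo-coherent $E$. This works, but it trades the paper's soft inputs (compactness of perfect complexes, which makes the factorization free, and exactness of $f^\ast$ commuting with truncation) for two statements that need more care than your sketch gives: that a bounded complex with quasi-coherent cohomology on a Noetherian, not necessarily separated, scheme is a filtered colimit of uniformly bounded coherent subcomplexes (true, via $D^b_{\operatorname{Qcoh}}(X)\simeq D^b(\operatorname{Qcoh}(X))$ and coherent subcomplexes), and the commutation of $\operatorname{Hom}(E,-)$ with uniformly bounded filtered homotopy colimits, whose cleanest proof is itself approximation by a perfect complex together with \Cref{lem:samehoms} --- at which point the paper's detour through perfect complexes is doing the same work more economically. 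The formal deduction of the dimension inequalities is identical in both treatments.
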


Part (1) was known with $D^b_{\operatorname{coh}}$ replaced by $\operatorname{perf}$, see \cite[Theorems 3.5 and 4.2]{Balmer:2016}. In the special case that $\mathcal{O}_X \to\mathbb{R} f_\ast \mathcal{O}_Y$ is an isomorphism, part (2) follows from an argument of Kawamata \cite[Lemma 7.4]{Kawamata:2022}. A consequence of \Cref{thm:summandofpullpush} is demonstrated in \Cref{ex:derived_splinters}, where the Rouquier dimension of $D^b_{\operatorname{coh}}(X)$ is shown to be no more than twice the Krull dimension of $X$ for (birational) derived splinter varieties over a field. Furthermore, under the assumption that \cite[Conjecture 10]{Orlov:2009} holds, the Rouquier dimension of the bounded derived category would become the Krull dimension for such varieties. This observation was previously known only for varieties with rational singularities.

We are able to prove a partial reverse inequality to the one in \Cref{thm:summandofpullpush}(1):

\begin{theorem}\label{thm:etale_descent_rouquier_dimension}
    If $f \colon U \to X$ is a surjective \'{e}tale morphism of Noetherian affine schemes and $X$ admits a dualizing complex, then $\dim D^b_{\operatorname{coh}}(U) = \dim D^b_{\operatorname{coh}}(X)$. 
\end{theorem}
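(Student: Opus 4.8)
The plan is to prove the nontrivial inequality $\dim D^b_{\operatorname{coh}}(X)\le\dim D^b_{\operatorname{coh}}(U)$; the reverse inequality is exactly \Cref{thm:summandofpullpush}(1). Write $d:=\dim D^b_{\operatorname{coh}}(U)$. Both dimensions are finite, since $X$ admits a dualizing complex and $U$ inherits one via $f^!\omega_X=f^\ast\omega_X$. By \Cref{thm:generatingperf} it suffices to produce a single $G\in D^b_{\operatorname{coh}}(X)$ with $\operatorname{perf}X\subseteq\langle G\rangle_d$; I never need to build non-perfect complexes directly, which is what makes the affine (non-$\operatorname{hom}$-finite) setting tractable.

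The engine is the \emph{finite} étale case, which I would isolate as a lemma: if $g\colon V\to Y$ is a surjective finite étale morphism of Noetherian affine schemes, then $\dim D^b_{\operatorname{coh}}(Y)\le\dim D^b_{\operatorname{coh}}(V)$. Since $g$ is finite, $g_\ast=\mathbb{R}g_\ast$ is exact and preserves $D^b_{\operatorname{coh}}$, so as a triangulated functor $g_\ast\langle H\rangle_n\subseteq\langle g_\ast H\rangle_n$. The crucial point is that, because $g$ is étale its trace form is perfect, so the trace $\operatorname{tr}\colon g_\ast\mathcal{O}_V\to\mathcal{O}_Y$ is surjective, hence surjective on global sections as $Y$ is affine; picking $\theta$ with $\operatorname{tr}(\theta)=1$ splits the unit $\mathcal{O}_Y\to g_\ast\mathcal{O}_V$ \emph{with no assumption that the degree be invertible}. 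By the projection formula $g_\ast g^\ast N\cong N\otimes^{\mathbb{L}}g_\ast\mathcal{O}_V$, so the unit $N\to g_\ast g^\ast N$ is split and $N$ is a direct summand of $g_\ast g^\ast N$ for every $N$. Taking a strong generator $H$ with $\langle H\rangle_{\dim D^b_{\operatorname{coh}}(V)}=D^b_{\operatorname{coh}}(V)$ and applying this to a perfect $P$ on $Y$: the complex $g^\ast P$ is perfect, so lies in $\langle H\rangle_{\dim D^b_{\operatorname{coh}}(V)}$, whence $P$ is a summand of $g_\ast g^\ast P\in\langle g_\ast H\rangle_{\dim D^b_{\operatorname{coh}}(V)}$; as this subcategory is closed under summands, $\operatorname{perf}Y\subseteq\langle g_\ast H\rangle_{\dim D^b_{\operatorname{coh}}(V)}$ and \Cref{thm:generatingperf} finishes the lemma.

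The difficulty is that a surjective étale morphism of affine schemes need not be finite, nor even dominated by a finite one: already for $X=\operatorname{Spec}\mathbb{Z}$ and $U=\operatorname{Spec}(\mathbb{Z}[1/2]\times\mathbb{Z}[1/3])$ the unit $\mathcal{O}_X\to f_\ast\mathcal{O}_U$ fails to split, and no finite morphism factors through $f$ since its image would be closed yet contained in a proper open. So the lemma cannot be applied to $f$ globally, and my plan is to reduce to it one closed point at a time. For each maximal ideal $\mathfrak{m}$ of the coordinate ring of $X$, surjectivity of $f$ supplies a point $u$ of $U$ above it, and the induced map of completed local rings $\widehat{\mathcal{O}}_{X,\mathfrak{m}}\to\widehat{\mathcal{O}}_{U,u}$ is finite étale by the structure theory of étale morphisms (Hensel's lemma splits off a monic factor realizing the finite separable residue extension). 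The lemma then yields $\dim D^b_{\operatorname{coh}}(\widehat{\mathcal{O}}_{X,\mathfrak{m}})\le\dim D^b_{\operatorname{coh}}(\widehat{\mathcal{O}}_{U,u})$ at every closed point.

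It remains to glue these pointwise bounds into a global inequality, and this is where the dualizing complex becomes essential and where I expect the main obstacle to lie. Concretely I would establish two comparisons: that $\dim D^b_{\operatorname{coh}}(X)\le\sup_{\mathfrak{m}}\dim D^b_{\operatorname{coh}}(\widehat{\mathcal{O}}_{X,\mathfrak{m}})$, so that controlling the completed local rings controls $X$; and that $\dim D^b_{\operatorname{coh}}(\widehat{\mathcal{O}}_{U,u})\le\dim D^b_{\operatorname{coh}}(U)$, which I would obtain by applying \Cref{thm:summandofpullpush}(1) to the (étale) localizations and then descending along the flat completion map. Both comparisons hinge on local duality, hence on the dualizing complex, to translate the number of cones needed for a strong generator into data visible after completion; it is this localization-and-completion principle for the \emph{number of cones} — rather than the finite étale transfer — that is the genuinely hard part. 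Chaining $\dim D^b_{\operatorname{coh}}(X)\le\sup_{\mathfrak{m}}\dim D^b_{\operatorname{coh}}(\widehat{\mathcal{O}}_{X,\mathfrak{m}})\le\sup_{\mathfrak{m}}\dim D^b_{\operatorname{coh}}(\widehat{\mathcal{O}}_{U,u})\le\dim D^b_{\operatorname{coh}}(U)$ then gives $\dim D^b_{\operatorname{coh}}(X)\le d$ and closes the argument.
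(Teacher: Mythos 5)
Your reduction to the finite \'etale case is fine (the trace splitting shows $\mathcal{O}_Y \to g_\ast\mathcal{O}_V$ splits, so your lemma is really an instance of \Cref{thm:summandofpullpush}(2)), and you correctly diagnose that a general surjective \'etale morphism of affines need not be dominated by a finite one. But the step you defer --- gluing the pointwise bounds --- is not a technical loose end; it is a genuine gap, and neither of the two comparisons you would need follows from anything in this paper. The inequality $\dim D^b_{\operatorname{coh}}(X)\le\sup_{\mathfrak{m}}\dim D^b_{\operatorname{coh}}(\widehat{\mathcal{O}}_{X,\mathfrak{m}})$ is a local-to-global principle for Rouquier dimension that is at least as hard as the theorem itself: even granting a uniform bound $d$ at every completed local ring, the generators realizing it vary with $\mathfrak{m}$, the set of closed points is infinite, and no mechanism is offered for assembling these local generators into a single $G\in D^b_{\operatorname{coh}}(X)$ with $\operatorname{perf}X\subseteq\langle G\rangle_{d+1}$. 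Likewise $\dim D^b_{\operatorname{coh}}(\widehat{\mathcal{O}}_{U,u})\le\dim D^b_{\operatorname{coh}}(U)$ is unproved: completion is flat but is neither \'etale nor a filtered colimit of \'etale algebras, so neither \Cref{thm:summandofpullpush}(1) nor the argument of \Cref{lem:descentforcoh} and \Cref{cor:hensel} applies, and it is not clear that every object of $D^b_{\operatorname{coh}}(\widehat{\mathcal{O}}_{U,u})$ is a summand of a pullback from $U$. An appeal to ``local duality'' is not a proof of either claim.

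The paper takes a different route that avoids completions entirely. By Zariski's Main Theorem one factors $f$ through a finite morphism $\overline{U}\to X$ with $U$ covered by principal opens of $\overline{U}$; Deligne's formula (\Cref{lem:deligne}) then produces, from a strong generator $G$ of $D^b_{\operatorname{coh}}(U)$, an object $H\in D^b_{\operatorname{coh}}(X)$ such that every $H$-ghost pulls back to a $G$-ghost (\Cref{lem:ghostetale}). A composition of $d+1$ $H$-ghosts therefore pulls back to a composition of $d+1$ $G$-ghosts, which vanishes by the ghost lemma; since $f$ is flat, affine and surjective, pullback is faithful on $\operatorname{Hom}$-modules (\Cref{lem:ghostaffinepullback}(2)), so the composition already vanishes on $X$, and Letz's converse ghost lemma --- this is exactly where the dualizing complex enters --- gives $\langle H\rangle_{d+1}=D^b_{\operatorname{coh}}(X)$. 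To salvage your outline you would need to replace the completion-and-gluing step by a global transfer mechanism of this kind.
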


We prove this using a blend of algebraic and geometric arguments, namely Letz's converse ghost Lemma and Deligne's formula, see \Cref{rmk:coghost_lemma} for further background. As applications, we prove that the Rouquier dimension of the bounded derived category of a Noetherian local ring with a dualizing complex is invariant under (strict) Henselization (\Cref{cor:hensel}), and that the bounded derived category of an affine nodal curve is 1 (\Cref{cor:nodal}).  
\begin{ack}
    The authors would like to thank Amnon Neeman, Srikanth B. Iyengar, Josh Pollitz, Souvik Dey, and Jan {\v{S}}\v{t}ov\'{i}{\v{c}}ek for taking a look at an earlier draft of our work. The second author was funded by the Deutsche Forschungsgemeinschaft (DFG, German Research Foundation) under Germany's Excellence Strategy – EXC-2047/1 – 390685813 and by the European Union (ERC, ZETA-FM, 8641450). He is grateful to the Hausdorff Research Institute for Mathematics for its support and hospitality while this work was being carried out. The recently posted paper \cite{Biswas/Chen/Rahul/Parker/Zheng:2023} on approximability and bounded $t$-structures touches on similar subject matter but proves a different flavor of results. It would be interesting to find a relation between their work and ours. Both authors thank the anonymous referee for help suggestions on improvements to our work. 
\end{ack}

\begin{disclaimer}
    We have chosen to write primarily in the language of Noetherian schemes and the bounded derived category of coherent sheaves for the sake of simplicity. However, many results in our work extend to quasi-compact quasi-separated algebraic spaces and their bounded pseudo-coherent derived categories.
\end{disclaimer}

\begin{notation}
    Let $X$ be a scheme.
    \begin{itemize}
        \item $D(X)$ denotes the unbounded derived category of the category of $\mathcal{O}_X$-modules 
        \item $D_{\operatorname{Qcoh}}(X)$ denotes the strictly full subcategory of objects in $D(X)$ whose cohomology sheaves are quasi-coherent $\mathcal{O}_X$-modules
        \item $\operatorname{perf}X$ denotes the strictly full subcategory of $D(X)$ consisting of perfect complexes
         \item If $X$ is locally Noetherian, $D^b_{\operatorname{coh}}(X)$ (resp. $D^-_{\operatorname{coh}}(X)$) denotes the strictly full subcategory of $D(X)$ whose objects are bounded (resp. bounded above) with coherent cohomology sheaves.
         \item We will occasionally abuse notation and write $D^b_{\operatorname{coh}}(R)$ instead of $D^b_{\operatorname{coh}}(\operatorname{Spec}(R))$ for a Noetherian ring $R$, which coincides with the bounded derived category of finitely generated $R$-modules.
    \end{itemize}
\end{notation}

\section{Generation}
\label{sec:generation}

Before proceeding forth to our results, we provide a quick overview for generation in triangulated categories and draw content from \cite{Krause:2022, Huybrechts:2006, BVdB:2003, Rouquier:2008, Neeman:2021, OS:2012}. Consider a triangulated category $\mathcal{T}$ equipped with the shift functor $[1]\colon \mathcal{T} \to \mathcal{T}$.
 
\begin{definition}\label{def:thick_subcategory}
    Let $\mathcal{S}$ be a subcategory of $\mathcal{T}$.
        \begin{enumerate}
            \item $\operatorname{add}(\mathcal{S})$ is the smallest strictly full subcategory of $\mathcal{T}$ containing $\mathcal{S}$ closed under shifts, finite coproducts, and retracts
            \item $\mathcal{S}$ is said to be \textbf{thick} if it is closed under retracts
            \item $\langle \mathcal{S} \rangle$ denotes the smallest strictly full, thick, triangulated  subcategory in $\mathcal{T}$ containing $\mathcal{S}$
            \item If $\mathcal{T}$ admits all small coproducts, then $\operatorname{Add}(\mathcal{S})$ is the smallest strictly full subcategory of $\mathcal{T}$ containing $\mathcal{S}$ closed under shifts, small coproducts, and retracts.
        \end{enumerate}
\end{definition}

\begin{remark}
    Let $\mathcal{S}$ be a subcategory of $\mathcal{T}$. There exists an exhaustive filtration:
    \begin{displaymath}
        \langle \mathcal{S} \rangle_0 \subseteq \langle \mathcal{S} \rangle_1 \subseteq \cdots \subseteq \bigcup_{n=0}^\infty \langle \mathcal{S} \rangle_n = \langle \mathcal{S} \rangle.
    \end{displaymath}
    where the additive subcategories are defined as
    \begin{enumerate}
        \item $\langle \mathcal{S} \rangle_0$ is the full subcategory consisting of all objects isomorphic to the zero object
        \item $\langle \mathcal{S} \rangle_1:= \operatorname{add}(\mathcal{S})$
        \item $\langle \mathcal{S} \rangle_n := \operatorname{add} \big( \{ \operatorname{cone}(\varphi) : \varphi\in \operatorname{Hom}_\mathcal{T} (\langle \mathcal{S} \rangle_{n-1}, \langle \mathcal{S} \rangle_1)  \} \big)$.
    \end{enumerate}
    The specific presentation for $(3)$ occurs in \cite{Krause:2023}.
\end{remark}

\begin{definition}\label{def:coprod_subcategory}(\cite[$\S 1$]{Neeman:2021})
    Let $\mathcal{S}$ be a subcategory of $\mathcal{T}$.
    \begin{enumerate}
        \item $\operatorname{coprod}_0 \big(\mathcal{S} ( -\infty,\infty)\big)$ is the full subcategory consisting of all objects isomorphic to the zero object
        \item $\operatorname{coprod}_1 \big(\mathcal{S} ( -\infty,\infty)\big)$ is the smallest strictly full subcategory containing all finite coproducts of objects of the form $S[i]$ for some integer $i$ and $S \in \mathcal{S}$
        \item If $n\geq 2$, then $\operatorname{coprod}_n \big(\mathcal{S} ( -\infty,\infty)\big)$ is defined as the collection of objects of the form $\operatorname{cone}(\varphi)$ where $\varphi\colon A \to B$ such that $A\in \operatorname{coprod}_{n-1} \big(\mathcal{S} ( -\infty,\infty)\big)$ and $B\in \operatorname{coprod}_1 \big(\mathcal{S} ( -\infty,\infty) \big) \big)$.
    \end{enumerate}
\end{definition}

\begin{remark}
\label{rmk:retractsofcoprod}
    There is a distinction between \Cref{def:thick_subcategory} and \Cref{def:coprod_subcategory}. In the former, one utilizes cones, shifts, and \textit{retracts} of finite coproducts of an object. In contrast, the latter involves the use of cones, shifts, and finite coproducts. Moreover, for any $n \geq 0$, it can be verified that $\operatorname{coprod}_n (\mathcal{S})$ forms a strictly full subcategory of $\langle \mathcal{S} \rangle_n$. More precisely, $\langle \mathcal{S} \rangle _n$ consists exactly of retracts of objects of $\operatorname{coprod}_n (\mathcal{S}(-\infty, \infty)$. Also, an equivalent definition of $\langle \mathcal{S} \rangle_n$ is 
    $\operatorname{add} \big( \{ \operatorname{cone}(\varphi) : \varphi\in \operatorname{Hom}_\mathcal{T} (\langle \mathcal{S} \rangle_{1}, \langle \mathcal{S} \rangle_{n-1})  \} \big)$ 
    and similarly for $\operatorname{coprod}_n \big(\mathcal{S} (  -\infty,\infty)\big)$. See \cite[Section 2.2]{BVdB:2003}. It is important to note that $\operatorname{add}(\mathcal{S})$ in our notation allows direct summands, whereas $\operatorname{add}(\mathcal{S})$ in \cite{Neeman:2021, Aoki:2021} does not allow for direct summands.
\end{remark}

\begin{definition}\label{def:strong_generators}(\cite{ABIM:2010}, \cite{Rouquier:2008})
    Let $E,G$ be objects in $\mathcal{T}$.
    \begin{enumerate}
        \item If $E$ belongs to $\langle G \rangle$, then we say $E$ is \textbf{finitely built} by $G$. The \textbf{level}, denoted $\operatorname{level}_\mathcal{T}^G(E)$, of $E$ with respect to $G$ is the minimal non-negative integer $n$ such that $E\in \langle G \rangle_n$.
        \item If $\langle G \rangle =
        \mathcal{T}$, then $G$ is said to be a \textbf{classical generator} for $\mathcal{T}$.
        \item If $\langle G \rangle_n =
        \mathcal{T}$ for some $n\geq 0$, then $G$ is said to be a \textbf{strong generator} for $\mathcal{T}$. The \textbf{generation time} of $G$ is the minimal $n$ such that for all $E\in \mathcal{T}$ one has $\operatorname{level}_\mathcal{T}^G (E)\leq n+1$. 
        \item The \textbf{Rouquier dimension} of $\mathcal{T}$, is the smallest integer $d$ such that $\langle G \rangle_{d+1} = \mathcal{T}$ for some $G\in \mathcal{T}$, denoted $\dim \mathcal{T}$.
    \end{enumerate}
\end{definition}

\begin{example}\label{ex:strong_generator_examples}
    \hfill
    \begin{enumerate}
        \item \cite[Proposition 7.9]{Rouquier:2008} Let $X$ be a smooth quasi-projective variety over a field. If $\mathcal{L}$ is a very ample line bundle on $X$, then $\bigoplus^{\dim X}_{i=0} \mathcal{L}^{\otimes i}$ is a strong generator for $D^b_{\operatorname{coh}}(X)$ whose generation time is at most $2 \dim X$.
        \item \cite[Example 4.3]{Lank:2023} Let $\pi \colon \widetilde{X}\to X$ be a proper birational morphism of projective varieties over a field where $\widetilde{X}$ is smooth. If $\mathcal{L}$ is a very ample line bundle on $\widetilde{X}$, then $\mathbb{R}\pi_\ast (\bigoplus^{\dim X}_{i=0} \mathcal{L}^{\otimes i})$ is a strong generator for $D^b_{\operatorname{coh}}(X)$.
        \item \cite[Corollary 3.9]{BILMP:2023} Let $X$ be a quasi-projective variety over a perfect field of nonzero characteristic. Given a very ample line bundle $\mathcal{L}$ on $X$, $F_\ast^e (\bigoplus^{\dim X}_{i=0} \mathcal{L}^{\otimes i})$ is a strong generator for $D^b_{\operatorname{coh}}(X)$ where $e \gg 0$ and $F \colon X \to X$ is the Frobenius morphism.
        \item \cite[Corollary 5]{Olander:2023} Suppose $X$ is a quasi-affine Noetherian regular scheme of finite dimension. Then $\mathcal{O}_X$ is a strong generator for $D^b_{\operatorname{coh}}(X)$ whose generation time is at most $\dim X$.
    \end{enumerate}
\end{example}

\begin{definition}\label{def:big_generation}(\cite[$\S 2.2$]{BVdB:2003}, \cite[$\S 1$]{Neeman:2021}, \cite[$\S 3.3$]{Rouquier:2008})
    Let $\mathcal{S}$ be a subcategory of $\mathcal{T}$, and assume that $\mathcal{T}$ admits all small coproducts.
    \begin{enumerate}
        \item $\overline{\langle \mathcal{S} \rangle}_0$ is the full subcategory consisting of all objects isomorphic to the zero object
        \item $\overline{\langle \mathcal{S} \rangle}_1:= \operatorname{Add}(\mathcal{S})$
        \item $\overline{\langle \mathcal{S} \rangle}_n := \operatorname{Add} \big( \{ \operatorname{cone}(\varphi) : \varphi\in \operatorname{Hom}_\mathcal{T} (\overline{\langle \mathcal{S} \rangle}_{n-1}, \overline{\langle \mathcal{S} \rangle}_1)  \} \big)$.
    \end{enumerate}
    An object $G$ of $\mathcal{T}$ is said to be a \textbf{strong $\oplus$-generator} if there exists an $n \geq 0$ such that $\overline{\langle G \rangle}_n = \mathcal{T}$.
\end{definition}

\begin{remark}\label{rmk:big_to_small_generation_a_la_neeman}
    The special case where $\mathcal{T}=D_{\operatorname{Qcoh}}(X)$ for $X$ a Noetherian scheme is of primary interest to our work where \Cref{def:thick_subcategory} and \Cref{def:big_generation} interact. In particular, if $G$ is an object of $D^b_{\operatorname{coh}}(X)$ and $n\geq 0$, then 
    \begin{displaymath}
        \overline{\langle G \rangle}_n \cap D^b_{\operatorname{coh}}(X) = \langle G \rangle_n.
    \end{displaymath}
    This is readily verified from the proofs of Lemma 2.4-2.6 in \cite{Neeman:2021}. The special case where $G$ is in $\operatorname{perf}X$ is \cite[Proposition 2.2.4]{BVdB:2003}. Note that strong $\oplus$-generators with bounded coherent cohomology exist in $D_{\operatorname{Qcoh}}(X)$ when $X$ is a quasi-compact separated quasi-excellent scheme of finite Krull dimension, see \cite[Main Theorem]{Aoki:2021}. 
\end{remark}

\begin{definition}\label{def:coghost}(\cite{OS:2012,Kelly:1965,Beligiannis:2008})
    Let $\mathcal{C}$ be a full subcategory of $\mathcal{T}$. Choose objects $A,B$ in $\mathcal{T}$.
    \begin{enumerate}
        \item A map $g \colon A \to B$ is said to be $\mathcal{C}$-coghost if the induced map
        \begin{displaymath}
            \operatorname{Hom}_{\mathcal{T}} (B , C[n])\xrightarrow{(-) \circ g} \operatorname{Hom}_{\mathcal{T}} (A , C[n])
        \end{displaymath}
        vanishes for all objects $C$ in $\mathcal{C}$ and $n\in \mathbb{Z}$. An $n$-fold $\mathcal{C}$-coghost map is a composition of $n$ $\mathcal{C}$-coghost maps. The \textbf{coghost index} of $B$ with respect to $\mathcal{C}$, denoted $\operatorname{cogin}_{\mathcal{T}}^{\mathcal{C}} (B)$, is the smallest non-negative integer $n$ such that any $n$-fold $\mathcal{C}$-coghost map $A \to B$ vanishes.
        \item A map $g \colon A \to B$ is said to be $\mathcal{C}$-ghost if the induced map
        \begin{displaymath}
            \operatorname{Hom}_{\mathcal{T}} (C[n],A)\xrightarrow{g \circ (-)} \operatorname{Hom}_{\mathcal{T}} (C[n],B)
        \end{displaymath}
        vanishes for all objects $C$ in $\mathcal{C}$ and $n\in \mathbb{Z}$. An $n$-fold $\mathcal{C}$-ghost map is a composition of $n$ $\mathcal{C}$-ghost maps. The \textbf{ghost index} of $A$ with respect to $\mathcal{C}$, denoted $\operatorname{gin}_{\mathcal{T}}^{\mathcal{C}} (B)$, is the smallest non-negative integer $n$ such that any $n$-fold $\mathcal{C}$-ghost map $A \to B$ vanishes.
        \item If $\mathcal{C}$ consists of a single object $G$, then we write the coghost index as $\operatorname{cogin}^G_{\mathcal{T}}(-)$ and similarly for ghost index, and we refer to a $\mathcal{C}$-(co)ghost as a $G$-(co)ghost 
    \end{enumerate}
\end{definition}

\begin{remark}\label{rmk:coghost_lemma}
    Given a pair of objects $G,E$ in $\mathcal{T}$, the following is known \cite{Kelly:1965}:
    \begin{displaymath}
        \operatorname{cogin}^G_{\mathcal{T}}(E) , \operatorname{gin}^G_{\mathcal{T}}(E) \leq \operatorname{level}^G_{\mathcal{T}}(E).
    \end{displaymath}
    These inequalities are called the coghost and ghost lemmas. If $\mathcal{T}=D^b_{\operatorname{coh}}(X)$ where $X$ is a projective scheme over a commutative Noetherian ring, then $\operatorname{cogin}^G_{\mathcal{T}} (E) = \operatorname{level}^G_{\mathcal{T}} (E)$ by \cite[Theorem 4]{OS:2012}. This (more precisely, the inequality $\geq$) is sometimes called the converse coghost lemma. In the case $X$ admits a dualizing complex, one can deduce that also $\operatorname{gin}^G_{\mathcal{T}} (E)= \operatorname{level}^G_{\mathcal{T}} (E)$. See \cite[$\S 2.13$]{Letz:2021} for the special case when $X$ is an affine Noetherian scheme with a dualizing complex, which is all we need in this paper. We refer to this (more precisely, the inequality $\geq$) as the converse ghost lemma. 
\end{remark}

\begin{definition}\label{def:t_structure}(\cite{Beilinson/Bernstein/Deligne/Gabber:2018})
    A \textbf{$t$-structure} on $\mathcal{T}$ is a pair of strictly full subcategories $\mathcal{T}^{\leq 0},\mathcal{T}^{\geq 0}$ satisfying the following:
    \begin{enumerate}
        \item $\mathcal{T}^{\leq 0}[1]\subseteq \mathcal{T}^{\leq 0}$ and $\mathcal{T}^{\geq 0}[-1]\subseteq \mathcal{T}^{\geq 0}$
        \item $\operatorname{Hom}_{\mathcal{T}} (\mathcal{T}^{\leq 0} , \mathcal{T}^{\geq 0}[-1])=0$
        \item For any object $T$ in $\mathcal{T}$, there exists a distinguished triangle 
        \begin{displaymath}
            A \to T \to B[-1] \to A[1]
        \end{displaymath}
        where $A\in \mathcal{T}^{\leq 0}$ and $B\in \mathcal{T}^{\geq 0}$. 
    \end{enumerate}
    \end{definition}
    Define $\mathcal{T}^{\leq n} := \mathcal{T}^{\leq 0}[-n]$ and $\mathcal{T}^{\geq n}:= \mathcal{T}^{\geq 0}[-n]$. An object $E$ of $\mathcal{T}$ is \textbf{bounded} with respect to the $t$-structure if there is an integer $n > 0$ such that $E \in \mathcal{T}^{\leq n} \cap \mathcal{T}^{\geq -n}$. The $t$-structure is \textbf{bounded} provided every object is bounded. The \textbf{heart} of the $t$-structure is defined to be $\mathcal{T}^\heartsuit := \mathcal{T}^{\leq 0} \cap \mathcal{T}^{\geq 0}$.  The heart $\mathcal{T}^\heartsuit$ is a strictly full abelian subcategory of $\mathcal{T}$. Choose an integer $n$. The pair $(\mathcal{T}^{\leq n},\mathcal{T}^{\geq n})$ induces another $t$-structure on $\mathcal{T}$. The left (resp. right) adjoint of the inclusion $\mathcal{T}^{\geq n} \to \mathcal{T}$ (resp. $\mathcal{T}^{\leq n} \to \mathcal{T}$) is called a \textbf{truncation functor} of $\mathcal{T}$, and it is denoted $\tau_{\geq n} \colon \mathcal{T} \to \mathcal{T}^{\geq n}$ (resp. $\tau_{\leq n} \colon \mathcal{T} \to \mathcal{T}^{\leq n}$). Choose an object $E$ of $\mathcal{T}$. There exists a distinguished triangle
\begin{displaymath}
    \tau_{\leq n} (E) \to E \to \tau_{\geq n+1} (E) \to (\tau_{\leq n} (E))[1].
\end{displaymath}
These truncation functors give rise to the \textbf{$n$-cohomology functors} $H^n (-)\colon \mathcal{T} \to \mathcal{T}^\heartsuit$, which are defined as 
\begin{displaymath}
    H^n (E):= \tau_{\leq 0}\tau_{\geq 0} (E[n]).
\end{displaymath}
We will call a morphism $f \colon E \to F$ in $\mathcal{T}$ an \textbf{isomorphism in degree $n$} if one has $H^n(f)\colon H^n(E) \to H^n(F)$ is an isomorphism in $\mathcal{T}^\heartsuit$. The \textbf{amplitude} of a bounded object $E$ of $\mathcal{T}$ is $- \infty$ if the object is zero and 
\begin{displaymath}
    \sup\{ n \in \mathbb{Z} : H^n (E)\not=0\} - \inf\{ s \in \mathbb{Z} : H^s(E)\not=0 \}
\end{displaymath}
otherwise. In the latter case it is always an integer $\geq 0$. The $t$-structure is \textbf{non-degenerate} if $\cap_{n \geq 0} \mathcal{T}^{\geq n} = 0 = \cap_{n \geq 0} \mathcal{T}^{\leq - n}$. In this case, an object $E$ of $\mathcal{T}$ is zero if and only if $H^n(E) = 0$ for all $n$. A bounded $t$-structure is always non-degenerate. 

The derived category of an abelian category has a canonical non-degenerate $t$-structure whose truncation and cohomology functors are the standard ones, and the categories $D_{\operatorname{Qcoh}}, D^b_{\operatorname{coh}}, D^-_{\operatorname{coh}}$ considered in this paper have canonical non-degenerate $t$-structures compatible with the one on $D(X)$. 

\section{Approximations}
\label{sec:approximations}

Now we are in a position to prove the results of our work. The following two lemmas, though furnished in elementary techniques, are important in applications of interest.

\begin{lemma}\label{lem:samehoms}
    Consider a triangulated category $\mathcal{T}$ equipped with a non-degenerate $t$-structure. If $A \to B$ is a map in $\mathcal{T}$ which is an isomorphism in degrees $\geq a$ for some integer $a$, then the induced map $\operatorname{Hom}(B, E) \to \operatorname{Hom}(A, E)$ is an isomorphism for all $E$ in $\mathcal{T}^{\geq a}$.
\end{lemma}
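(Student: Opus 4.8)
The plan is to exploit the fact that, for a target $E$ in $\mathcal{T}^{\geq a}$, the functor $\operatorname{Hom}(-,E)$ only detects the part of an object lying in degrees $\geq a$, and that the hypothesis guarantees $A$ and $B$ agree precisely on that part. Concretely, I would reduce the claim to showing that $\tau_{\geq a}(f)$ is an isomorphism.

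First I would recall that $\tau_{\geq a}$ is the left adjoint of the inclusion $\mathcal{T}^{\geq a}\hookrightarrow \mathcal{T}$, so that for every object $X$ and every $E\in \mathcal{T}^{\geq a}$ the unit $\eta_X\colon X\to \tau_{\geq a}X$ induces a natural isomorphism
\[ \operatorname{Hom}(\tau_{\geq a}X,E)\xrightarrow{\eta_X^\ast}\operatorname{Hom}(X,E). \]
(Equivalently, without invoking adjunction, apply $\operatorname{Hom}(-,E)$ to the truncation triangle $\tau_{\leq a-1}X\to X\to \tau_{\geq a}X\to(\tau_{\leq a-1}X)[1]$ and observe that both $\operatorname{Hom}(\tau_{\leq a-1}X,E)$ and $\operatorname{Hom}((\tau_{\leq a-1}X)[1],E)$ vanish by the orthogonality axiom, since $\tau_{\leq a-1}X$ and its shift lie in $\mathcal{T}^{\leq a-1}$ while $E\in\mathcal{T}^{\geq a}$.) Applying this naturally to $f$, the induced square identifies the map $\operatorname{Hom}(B,E)\to\operatorname{Hom}(A,E)$ of the statement with precomposition by $\tau_{\geq a}(f)\colon \tau_{\geq a}A\to \tau_{\geq a}B$. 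Hence it suffices to prove $\tau_{\geq a}(f)$ is an isomorphism.

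For that I would compute cohomology. From the truncation triangles, $H^n(\tau_{\geq a}X)=H^n(X)$ for $n\geq a$ and $H^n(\tau_{\geq a}X)=0$ for $n<a$; by naturality of the unit, $H^n(\tau_{\geq a}(f))$ is identified with $H^n(f)$ for $n\geq a$, hence is an isomorphism by hypothesis, while for $n<a$ both sides vanish. Thus $\tau_{\geq a}(f)$ induces isomorphisms on all cohomology objects, and non-degeneracy of the $t$-structure forces the cone of $\tau_{\geq a}(f)$ to have vanishing cohomology in every degree, hence to be zero; therefore $\tau_{\geq a}(f)$ is an isomorphism, completing the argument.

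The main obstacle—and the reason one should pass to $\tau_{\geq a}$ rather than argue directly—is the behaviour in degree $a-1$. If one instead completes $f$ to a triangle $A\to B\to \operatorname{cone}(f)\to A[1]$ and applies $\operatorname{Hom}(-,E)$, the long exact sequence gives injectivity from $\operatorname{Hom}(\operatorname{cone}(f),E)=0$, but surjectivity requires controlling $\operatorname{Hom}(\operatorname{cone}(f)[-1],E)$. The hypothesis only yields $\operatorname{cone}(f)\in\mathcal{T}^{\leq a-1}$, and $H^{a-1}(\operatorname{cone}(f))$ may be nonzero, so $\operatorname{cone}(f)[-1]\in\mathcal{T}^{\leq a}$ can have nontrivial degree-$a$ cohomology and the relevant $\operatorname{Hom}$ need not vanish. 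Truncating at $a$ first is exactly what discards this potentially nonzero degree-$(a-1)$ cohomology, and this is the crux of the proof.
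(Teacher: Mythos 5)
Your proof is correct and follows essentially the same route as the paper's: both reduce the claim to the commutative square comparing $f$ with $\tau_{\geq a}(f)$, noting that the vertical maps induce isomorphisms on $\operatorname{Hom}(-,E)$ for $E\in\mathcal{T}^{\geq a}$ and that $\tau_{\geq a}(f)$ is an isomorphism. You merely spell out the two steps the paper leaves implicit (the orthogonality argument for the vertical maps, and the cohomology-plus-non-degeneracy argument for $\tau_{\geq a}(f)$), and your closing remark about degree $a-1$ correctly identifies why a direct cone argument would not suffice.
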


\begin{proof}
    Consider the following commutative square, and observe the bottom horizontal arrow is an isomorphism from our assumption:
    \begin{displaymath}
        \begin{tikzcd}
        A \ar[r] \ar[d] &B\ar[d] \\
        \tau_{\geq a}A \ar[r] & \tau_{\geq a}B.
        \end{tikzcd}
    \end{displaymath}
    If we apply the functor $\operatorname{Hom}(-, E)$ to the diagram, then the two vertical arrows become isomorphisms, and so the result follows. 
\end{proof}

\begin{lemma}\label{lem:summandofapprox}
    Let $\mathcal{T}$ be a triangulated category equipped with a non-degenerate $t$-structure. Let $a, N$ be integers with $N>0$. Suppose given two morphisms $P \to E$ and $F \to F'$ in $\mathcal{T}$. Assume:
    \begin{itemize}
        \item $P \to E$ is an isomorphism in degrees $\geq a - N$ and $E$ is in $\mathcal{T}^{\geq a}$,
        \item $F \to F^\prime$ is an isomorphism in degrees $\geq a$ and $F^\prime \in \mathcal{T}^{\geq a - N}$. 
    \end{itemize}
    If $P$ is a retract of $F$, then $E$ is a retract of $F^\prime$.
\end{lemma}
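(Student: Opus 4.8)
The plan is to use \Cref{lem:samehoms} as the engine: it converts each of the two degreewise-isomorphism hypotheses into a genuine isomorphism of $\operatorname{Hom}$-groups, and then a short formal manipulation produces the splitting. Write $u\colon P\to E$ and $v\colon F\to F'$ for the two given morphisms, and let $\iota\colon P\to F$ and $\rho\colon F\to P$ with $\rho\iota=\operatorname{id}_P$ be the retract data exhibiting $P$ as a retract of $F$. The goal is to produce $j\colon E\to F'$ and $s\colon F'\to E$ with $s\circ j=\operatorname{id}_E$, which is exactly what it means for $E$ to be a retract of $F'$.

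First I would record the three instances of \Cref{lem:samehoms} needed, each obtained by checking that the target object lies in the appropriate $\mathcal{T}^{\geq\bullet}$. Since $u$ is an isomorphism in degrees $\geq a-N$ and $F'\in\mathcal{T}^{\geq a-N}$, precomposition with $u$ gives an isomorphism $\operatorname{Hom}(E,F')\xrightarrow{\sim}\operatorname{Hom}(P,F')$. Since $v$ is an isomorphism in degrees $\geq a$ and $E\in\mathcal{T}^{\geq a}$, precomposition with $v$ gives an isomorphism $\operatorname{Hom}(F',E)\xrightarrow{\sim}\operatorname{Hom}(F,E)$. Finally, because $E\in\mathcal{T}^{\geq a}\subseteq\mathcal{T}^{\geq a-N}$, precomposition with $u$ also gives an isomorphism $\operatorname{Hom}(E,E)\xrightarrow{\sim}\operatorname{Hom}(P,E)$; here I will only need that this map is injective. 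Using the first isomorphism I define $j\colon E\to F'$ to be the unique map with $j\circ u=v\circ\iota$, and using the second I define $s\colon F'\to E$ to be the unique map with $s\circ v=u\circ\rho$. Then I compute
\[
    (s\circ j)\circ u = s\circ(v\circ\iota)=(s\circ v)\circ\iota=(u\circ\rho)\circ\iota=u\circ(\rho\circ\iota)=u=\operatorname{id}_E\circ u .
\]

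The last step is the only place where the hypotheses do nontrivial work: I must pass from the equality $(s\circ j)\circ u=\operatorname{id}_E\circ u$ of maps $P\to E$ to the equality $s\circ j=\operatorname{id}_E$ of maps $E\to E$. This is precisely the injectivity of the third isomorphism above, so the two elements of $\operatorname{Hom}(E,E)$ coincide, giving $s\circ j=\operatorname{id}_E$ and hence the desired retract. I do not anticipate a genuine obstacle here; the entire content lies in bookkeeping the degree bounds so that each of the three $\operatorname{Hom}$-groups is controlled by \Cref{lem:samehoms}, the one point worth flagging being the inclusion $\mathcal{T}^{\geq a}\subseteq\mathcal{T}^{\geq a-N}$ (valid since $N>0$) that feeds the final injectivity argument.
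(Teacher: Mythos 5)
Your proposal is correct and is essentially the paper's own proof: both construct $j$ and $s$ via the isomorphisms $\operatorname{Hom}(E,F')\to\operatorname{Hom}(P,F')$ and $\operatorname{Hom}(F',E)\to\operatorname{Hom}(F,E)$ supplied by \Cref{lem:samehoms}, and then conclude $s\circ j=\operatorname{id}_E$ from the injectivity (uniqueness) of $\operatorname{Hom}(E,E)\to\operatorname{Hom}(P,E)$. Your write-up just makes the degree bookkeeping, including the inclusion $\mathcal{T}^{\geq a}\subseteq\mathcal{T}^{\geq a-N}$, more explicit than the paper does.
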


\begin{proof}
    There exists a solid diagram where the composition of the horizontal solid arrows is the identity:
    \begin{displaymath}
        \begin{tikzcd}
            P \ar[r] \ar[d] & F \ar[r] \ar[d] & P \ar[d] \\
            E \ar[r, dashed] & F^\prime \ar[r, dashed] & E.
        \end{tikzcd}
    \end{displaymath}
    It follows from \Cref{lem:samehoms} that there exist unique dashed arrows making the diagram commute: The morphisms
    $$
    \operatorname{Hom}(E, F') \to \operatorname{Hom}(P, F') ,\hspace{2em} \operatorname{Hom}(F', E) \to \operatorname{Hom}(F, E)
    $$
    are both isomorphisms. The same argument ensures there exists a unique map $E \to E$ that makes the outer square of the diagram commute. Consequently, we conclude that the composition $E \to F^\prime \to E$ is the identity map of $E$.
\end{proof}

\begin{lemma}\label{lem:approxcoprod}
    Let $\mathcal{T}$ be a triangulated category with a bounded $t$-structure. Choose an object $G$ of $\mathcal{T}$ and positive integer $d$. There exists an $N= N(d) > 0$ such that the following holds: If $a \in \mathbb{Z}$ and $F$ belongs to $\mathcal{A}_d := \operatorname{coprod}_d\big( G(-\infty,\infty)\big)$, then there exist an object $F^\prime$ in $\mathcal{A}_d \cap \mathcal{T}^{\geq a - N}$ and a map $F \to F^\prime$ which is an isomorphism in degrees $\geq a$.
\end{lemma}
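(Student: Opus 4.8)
The plan is to induct on $d$ using the recursive description of $\mathcal{A}_d=\operatorname{coprod}_d\big(G(-\infty,\infty)\big)$ from \Cref{def:coprod_subcategory}. As the $t$-structure is bounded, $G$ is a bounded object; let $w\geq 0$ denote its amplitude, so $G\in\mathcal{T}^{\geq\alpha}\cap\mathcal{T}^{\leq\beta}$ with $\beta-\alpha=w$. I will show that $N(1)=\max(w,1)$ works for the base case and that $N(d)=N(d-1)+w+1$ works for $d\geq 2$; the resulting bound depends only on $d$ and the fixed object $G$, as required.

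For the base case $d=1$, here $F=\bigoplus_j G[i_j]$ is a finite coproduct of shifts, and the summand $G[i_j]$ has cohomology in degrees $[\alpha-i_j,\beta-i_j]$. Take $F'$ to be the sub-coproduct spanned by those summands with $\beta-i_j\geq a$, with $F\to F'$ the projection. The omitted summands lie in $\mathcal{T}^{\leq a-1}$, so the projection is an isomorphism in degrees $\geq a$; and each retained summand has $\alpha-i_j\geq a-w$, so $F'\in\mathcal{A}_1\cap\mathcal{T}^{\geq a-w}$.

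For the inductive step, write $F\cong\operatorname{cone}(\varphi\colon A\to B)$ with $A\in\mathcal{A}_{d-1}$ and $B\in\mathcal{A}_1$. Apply the base case to $B$ at threshold $a$ to obtain $B\to B'$, an isomorphism in degrees $\geq a$ with $B'\in\mathcal{A}_1\cap\mathcal{T}^{\geq a-w}$, and apply the inductive hypothesis to $A$ at the lowered threshold $a-w$ to obtain $A\to A'$, an isomorphism in degrees $\geq a-w$ with $A'\in\mathcal{A}_{d-1}\cap\mathcal{T}^{\geq a-w-N(d-1)}$. The essential point is to lift $\varphi$ to a map $\varphi'\colon A'\to B'$ fitting in a commutative square. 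Since $B'\in\mathcal{T}^{\geq a-w}$ and $A\to A'$ is an isomorphism in degrees $\geq a-w$, \Cref{lem:samehoms} gives a bijection $\operatorname{Hom}(A',B')\xrightarrow{\sim}\operatorname{Hom}(A,B')$; hence the composite $A\xrightarrow{\varphi}B\to B'$ factors uniquely through $A\to A'$, and this factorization is exactly the commutativity of the square relating $\varphi$ and $\varphi'$. Setting $F'=\operatorname{cone}(\varphi')$ produces an object of $\mathcal{A}_d$ together with an induced map $F\to F'$.

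Finally, I would check the two required properties of $F'$. From the triangle $B'\to F'\to A'[1]$, closure of the subcategories $\mathcal{T}^{\geq m}$ under extensions together with $A'[1]\in\mathcal{T}^{\geq a-w-N(d-1)-1}$ yields $F'\in\mathcal{T}^{\geq a-N(d)}$ for $N(d)=N(d-1)+w+1$. Comparing the long exact cohomology sequences of the two triangles and invoking the five lemma—using that both $A\to A'$ and $B\to B'$ are isomorphisms in degrees $\geq a$—shows $F\to F'$ is an isomorphism in degrees $\geq a$. I expect the factorization step to be the main obstacle: a naive matching of thresholds fails because $B'$ is constrained to remain a coproduct of shifts of $G$ and so can only be truncated down to degree $a-w$, not $a$; lowering the threshold for $A$ by the amplitude $w$ is what makes \Cref{lem:samehoms} applicable, and it is precisely this that forces $N$ to grow additively with $d$.
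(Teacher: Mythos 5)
Your argument is correct and is essentially the paper's proof: the same induction on $d$, the same base-case splitting of a coproduct of shifts, the same use of \Cref{lem:samehoms} to produce the map $\varphi'$ and of the five lemma on the long exact sequences, with the same recursion $N(d)=N(d-1)+w+1$. The only (immaterial) difference is orientation: you write $F$ as $\operatorname{cone}(A\to B)$ with $A\in\mathcal{A}_{d-1}$, $B\in\mathcal{A}_1$ and lower the threshold on the $\mathcal{A}_{d-1}$ side, while the paper uses the equivalent decomposition with $A\in\mathcal{A}_1$, $B\in\mathcal{A}_d$ (via \Cref{rmk:retractsofcoprod}) and lowers the threshold on the $\mathcal{A}_1$ side.
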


\begin{proof}
    We will prove the lemma by induction on $d$ and we will denote $w$ the amplitude of $G$. 
    
    Consider the case  $d=1$. We will prove $N(1) = w$ works. Let $F$ be an object of $\mathcal{A}_1$ and $a$ an integer. Then $F$ is is isomorphic to an object of the form $F^\prime \oplus F^{\prime \prime}$ where $F^{\prime \prime} \in \mathcal{T}^{<a}$, $F^\prime \in \mathcal{T}^{\geq a - w}$ and $F^\prime, F^{\prime \prime} \in \mathcal{A}_1$. The projection $F \to F^\prime$ is the required map for our claim, and so, this furnishes the base case.

    Next, assume the lemma is known for the positive integer $d$. We will prove the lemma for $d+1$ with $N(d+1)=N(d)+w+1$. Let $F \in \mathcal{A}_{d+1}$ and $a \ \in \mathbf{Z}$. Then $F$ is the cone of a map $A \to B$ where $A \in \mathcal{A}_1$ and $B \in \mathcal{A}_d$, see \Cref{rmk:retractsofcoprod}. By our inductive hypothesis, there is a map $B \to B^\prime$ with $B^\prime \in \mathcal{A}_d \cap \mathcal{T}^{\geq a - N(d)}$ which is an isomorphism in degrees $\geq a$. Now as in the base case, $A = A^\prime \oplus A^{\prime \prime}$ where $A^{\prime \prime} \in \mathcal{T}^{< a - N(d)}$, $A^\prime \in \mathcal{T}^{\geq a - N(d)-w}$, and $A^\prime, A^{\prime \prime} \in \mathcal{A}_1$. We claim there is a map $A^\prime \to B^\prime$ fitting into a commutative diagram
    \begin{displaymath}
    \begin{tikzcd}
        A \ar[r] \ar[d] & B \ar[r] \ar[d] & F \ar[r] \ar[d] & A[1] \ar[d] \\
        A^\prime \ar[r] & B^\prime \ar[r] & F^\prime\ar[r] & A^\prime[1]
    \end{tikzcd}
    \end{displaymath}
    where $A \to A^\prime$ is the projection. This follows from \Cref{lem:samehoms}.
    Hence, from this square we obtain the remainder of the desired diagram via the axioms of a triangulated category. Since $A \to A'$ is an isomorphism in degrees $> a$ and $B \to B'$ is an isomorphism in degrees $\geq a$, we see that $F \to F'$ is an isomorphism in degrees $\geq a$, as needed. 
\end{proof}

\begin{proposition}\label{prop:approxisbuilt}
    Let $\mathcal{T}$ be a triangulated category with a bounded $t$-structure. Choose an object $G$ of $\mathcal{T}$ and $d$ a positive integer. There exists an $N>0$ such that the following implies an object $E$ belongs to $\langle G \rangle_d$:
    \begin{enumerate}
        \item $E \in \mathcal{T}^{\geq a}$ for some integer $a$,
        \item $P \to E$ is a map of $\mathcal{T}$ which is an isomorphism in degrees $\geq a - N$,
        \item $P \in \langle G \rangle_d$.
    \end{enumerate}
\end{proposition}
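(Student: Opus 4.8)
The plan is to take $N = N(d)$ to be the integer produced by \Cref{lem:approxcoprod} and then to deduce the proposition as a formal consequence of the two preceding lemmas together with the description of $\langle G \rangle_d$ recorded in \Cref{rmk:retractsofcoprod}. The strategy is to exhibit $E$ as a retract of an object lying in $\mathcal{A}_d = \operatorname{coprod}_d\big(G(-\infty,\infty)\big)$, since by \Cref{rmk:retractsofcoprod} this is precisely what it means for $E$ to belong to $\langle G \rangle_d$.

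First I would use hypothesis (3) and \Cref{rmk:retractsofcoprod}: because $P \in \langle G \rangle_d$, the object $P$ is a retract of some $F \in \mathcal{A}_d$. Next I would apply \Cref{lem:approxcoprod} to this $F$ and the integer $a$ from hypothesis (1), obtaining an object $F' \in \mathcal{A}_d \cap \mathcal{T}^{\geq a - N}$ together with a map $F \to F'$ that is an isomorphism in degrees $\geq a$. At this stage all the hypotheses of \Cref{lem:summandofapprox} are in place for the chosen pair $(a, N)$: the map $P \to E$ is an isomorphism in degrees $\geq a - N$ with $E \in \mathcal{T}^{\geq a}$ by (2) and (1), the map $F \to F'$ is an isomorphism in degrees $\geq a$ with $F' \in \mathcal{T}^{\geq a - N}$ by the previous step, and $P$ is a retract of $F$. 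Invoking \Cref{lem:summandofapprox} then shows that $E$ is a retract of $F'$.

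Since $F' \in \mathcal{A}_d$ and $E$ is a retract of $F'$, a final appeal to \Cref{rmk:retractsofcoprod} gives $E \in \langle G \rangle_d$, as desired. I do not anticipate a genuine obstacle here, as the substance has already been carried out in the two input lemmas: \Cref{lem:approxcoprod}, whose inductive construction yields an approximation that stays inside $\mathcal{A}_d$ while controlling how far below $a$ its cohomology can reach, and \Cref{lem:summandofapprox}, which promotes a retract relation among the approximating objects to one on $E$ itself. The only point requiring care is the bookkeeping: one must verify that the single integer $N = N(d)$ simultaneously plays the role of the connectivity gap in hypothesis (2) and of the lower bound on $F'$ supplied by \Cref{lem:approxcoprod}, so that both input maps feeding into \Cref{lem:summandofapprox} share the same pair $(a, N)$.
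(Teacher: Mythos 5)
Your proposal is correct and is essentially identical to the paper's own proof: both take $N = N(d)$ from \Cref{lem:approxcoprod}, realize $P$ as a retract of some $F \in \mathcal{A}_d$ via \Cref{rmk:retractsofcoprod}, approximate $F$ by $F' \in \mathcal{A}_d \cap \mathcal{T}^{\geq a-N}$, and then apply \Cref{lem:summandofapprox} to conclude $E$ is a retract of $F'$, hence lies in $\langle G \rangle_d$.
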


\begin{proof}
    This essentially comes from \Cref{lem:approxcoprod}, and we will keep the notation used there. Suppose (1)--(3) are true for the $N$ found in \Cref{lem:approxcoprod}. By \Cref{rmk:retractsofcoprod}, $P$ is a retract of an object $F$ of $\mathcal{A}_d$. Choose a map $F \to F^\prime$ as in \Cref{lem:approxcoprod} so that we are exactly in the case of \Cref{lem:summandofapprox}. Therefore, $E$ is a retract of $F^\prime$ as desired. 
\end{proof}

\begin{proposition}\label{prop:approximation_bounded_t_structure}
    Let $\mathcal{T}$ be a triangulated category with a bounded $t$-structure. Choose an object $G$ of $\mathcal{T}$ and a positive integer $d$. Assume there exists a collection of objects $\mathcal{P}$ of $\mathcal{T}$ that approximates objects of $\mathcal{T}$ in the following sense: For each object $E$ of $\mathcal{T}$ and integer $a$, there is a map $P \to E$ with $P \in \mathcal{P}$ which is an isomorphism in degrees $\geq a$. Then the following are equivalent: 
    \begin{enumerate}
        \item $\mathcal{T} = \langle G \rangle_d$
        \item $\mathcal{P} \subseteq \langle G \rangle _d$.
    \end{enumerate}
\end{proposition}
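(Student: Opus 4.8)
The plan is to prove the two implications separately, with essentially all of the content concentrated in one of them. The implication $(1) \Rightarrow (2)$ is immediate: if $\mathcal{T} = \langle G \rangle_d$, then in particular every object of the subcollection $\mathcal{P}$ lies in $\langle G \rangle_d$, so I would dispose of this in a single sentence.

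For the substantive direction $(2) \Rightarrow (1)$, the idea is to feed the approximation hypothesis directly into \Cref{prop:approxisbuilt}. First I would fix, once and for all, the integer $N = N(G,d) > 0$ furnished by \Cref{prop:approxisbuilt}; the point is that this $N$ depends only on $G$ and $d$ and not on the object to be built, so a single choice works for every object simultaneously. Now let $E$ be an arbitrary object of $\mathcal{T}$. Since the $t$-structure is bounded, $E$ lies in $\mathcal{T}^{\geq a}$ for some integer $a$; this supplies hypothesis (1) of \Cref{prop:approxisbuilt} and is the only place where boundedness of the $t$-structure enters. Applying the approximation hypothesis to the object $E$ and the integer $a - N$ produces a map $P \to E$ with $P \in \mathcal{P}$ which is an isomorphism in degrees $\geq a - N$, giving hypothesis (2). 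Finally, assumption $(2)$ of the proposition says exactly that $\mathcal{P} \subseteq \langle G \rangle_d$, so $P \in \langle G \rangle_d$, which is hypothesis (3). \Cref{prop:approxisbuilt} then yields $E \in \langle G \rangle_d$, and since $E$ was arbitrary we conclude $\mathcal{T} = \langle G \rangle_d$.

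Because the real work has already been carried out in \Cref{prop:approxisbuilt} (and, through it, in \Cref{lem:approxcoprod} and \Cref{lem:summandofapprox}), there is no genuine obstacle remaining in this argument. The only point requiring care is the bookkeeping of quantifiers: one must fix $N$ \emph{before} choosing $a$, and then invoke the approximation at the shifted bound $a - N$ rather than at $a$, so that the resulting map is an isomorphism across a wide enough range of degrees to satisfy the input of \Cref{prop:approxisbuilt}. I would be explicit that $N$ is uniform in $E$, since it is precisely this uniformity that lets the single choice of $N$ serve for all objects of $\mathcal{T}$ at once.
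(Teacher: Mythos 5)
Your proposal is correct and follows essentially the same route as the paper: both directions are handled identically, with $(2)\Rightarrow(1)$ reduced to \Cref{prop:approxisbuilt} by choosing $a$ with $E \in \mathcal{T}^{\geq a}$ and approximating $E$ in degrees $\geq a - N$. Your extra remark that $N$ is uniform in $E$ (depending only on $G$ and $d$) is a correct and harmless clarification of the quantifier order already implicit in the paper's argument.
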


\begin{proof}
    It is evident that $(1)\implies (2)$, so we verify the converse direction. Suppose there exist an object $G$ in $\mathcal{T}$ and $d \geq 0$ such that $\mathcal{P}$ is contained in $\langle G \rangle_d$. Let $E$ be any object in $\mathcal{T}$. There exists an integer $a$ such that $E \in \mathcal{T}^{\geq a}$. Take $N$ to be an integer as in \Cref{prop:approxisbuilt}. We can find a map $P \to E$ with $P\in \mathcal{P}$ which is an isomorphism in degrees $\geq a - N$. Since $P$ belongs to $\langle G \rangle_d$, then so does $E$ via \Cref{prop:approxisbuilt}. 
\end{proof}

Let $X$ be a Noetherian scheme. For all $E\in D^-_{\operatorname{coh}}(X)$ and integers $m$, there exists a map $\varphi\colon P\to E$ in $D_{\operatorname{Qcoh}}(X)$ with $P$ perfect on $X$ such that $\varphi$ is an isomorphism in degrees $\geq m$. This is \cite[Theorem 4.1]{Lipman/Neeman:2007}, and is known as \textit{approximation by perfect complexes}.

\begin{proof}[Proof of \Cref{thm:generatingperf}]
    This follows directly from \Cref{prop:approximation_bounded_t_structure} where we take $\mathcal{P}$ to be $\operatorname{perf}X$. See \cite[Theorem 4.1]{Lipman/Neeman:2007}.
\end{proof}

\begin{remark}
    Let $X$ be a Noetherian scheme. If $E$ is an object of $D^b_{\operatorname{coh}}(X)$, then 
    \begin{enumerate}
        \item $\operatorname{Supp}(E):= \cup^\infty_{n=-\infty} \operatorname{Supp}(\mathcal{H}^i (E))$
        \item $E$ is \textit{supported} on a closed subscheme $Z$ of $X$ if $\operatorname{Supp}(E)\subseteq Z$
        \item $E$ is \textit{scheme-theoretically supported} on a closed subscheme $Z$ of $X$ if there exists an object $E^\prime$ of $D^b_{\operatorname{coh}}(Z)$ such that $i_\ast E^\prime \cong E$ in $D^b_{\operatorname{coh}}(X)$ where $i$ is the associated closed immersion.
        \item If $E$ is supported on a closed subscheme $Z$, then there exists a nilpotent thickening $Z \subset Z'$ such that $E$ is scheme-theoretically supported on $Z'$, see \cite[Lemma 7.40]{Rouquier:2008}
    \end{enumerate}
\end{remark}

The following application of \Cref{thm:generatingperf} yields upper bounds on Rouquier dimension via scheme-theoretic support. See the end of \Cref{rmk:retractsofcoprod} for conventions on notation for $\operatorname{add}(-)$.

\begin{theorem}\label{the:rouquier_dimension_via_support_inequality}
    Let $\pi \colon Y \to X$ be a proper morphism of Noetherian schemes. Suppose $U \subset X$ is an open such that the restriction $\pi^{-1}(U) \to U$ is an isomorphism. If the cone of the natural map $\mathcal{O}_X \to \mathbb{R} \pi_\ast \mathcal{O}_Y$ is scheme-theoretically supported on closed subscheme $i \colon Z \to X$ contained in $X\setminus U$, then there is an inequality:
    \begin{displaymath}
        \dim D^b_{\operatorname{coh}}(X) \leq \dim D^b_{\operatorname{coh}}(Y) + \dim D^b_{\operatorname{coh}}(Z) + 1.
    \end{displaymath}
\end{theorem}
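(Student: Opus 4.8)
The plan is to reduce to perfect complexes via \Cref{thm:generatingperf} and then transport the generation problem onto $Y$ and $Z$ through $\mathbb{R}\pi_\ast$ and $i_\ast$. If either $\dim D^b_{\operatorname{coh}}(Y)$ or $\dim D^b_{\operatorname{coh}}(Z)$ is infinite there is nothing to prove, so I would set $d_Y := \dim D^b_{\operatorname{coh}}(Y)$ and $d_Z := \dim D^b_{\operatorname{coh}}(Z)$ and fix strong generators with $\langle G_Y\rangle_{d_Y+1} = D^b_{\operatorname{coh}}(Y)$ and $\langle G_Z\rangle_{d_Z+1} = D^b_{\operatorname{coh}}(Z)$. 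Then I take $G := \mathbb{R}\pi_\ast G_Y \oplus i_\ast G_Z$, which lies in $D^b_{\operatorname{coh}}(X)$ since $\pi$ is proper and $i$ is a closed immersion, and I aim to show $\operatorname{perf} X \subseteq \langle G\rangle_{d_Y+d_Z+2}$; by \Cref{thm:generatingperf} this yields $D^b_{\operatorname{coh}}(X) = \langle G\rangle_{d_Y+d_Z+2}$, which is exactly the asserted bound.

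The central device is the defining triangle of the cone. Writing $C$ for the cone of $\mathcal{O}_X \to \mathbb{R}\pi_\ast\mathcal{O}_Y$, the scheme-theoretic support hypothesis gives $C \cong i_\ast C'$ for some $C' \in D^b_{\operatorname{coh}}(Z)$, and there is a distinguished triangle $\mathcal{O}_X \to \mathbb{R}\pi_\ast\mathcal{O}_Y \to i_\ast C' \to \mathcal{O}_X[1]$. For a perfect complex $E$ on $X$ I would apply $E \otimes^{\mathbb{L}}(-)$ to this triangle and invoke the projection formula for $\pi$ and for $i$, so that the middle term becomes $\mathbb{R}\pi_\ast(\mathbb{L}\pi^\ast E)$ and the third term becomes $i_\ast(\mathbb{L}i^\ast E \otimes^{\mathbb{L}} C')$. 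Since $E$ is perfect, $\mathbb{L}\pi^\ast E$ and $\mathbb{L}i^\ast E$ are perfect, whence $\mathbb{L}\pi^\ast E \in D^b_{\operatorname{coh}}(Y)$ and $\mathbb{L}i^\ast E \otimes^{\mathbb{L}} C' \in D^b_{\operatorname{coh}}(Z)$; this is precisely where the reduction to perfect complexes pays off, keeping every object inside the bounded coherent categories where the two projection formulas and the coherence of $\mathbb{R}\pi_\ast$ apply without finiteness worries.

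Next I would use that the triangulated functors $\mathbb{R}\pi_\ast$ and $i_\ast$ carry $\langle -\rangle_n$ into $\langle -\rangle_n$ of the image, as they commute with shifts and cones and preserve finite coproducts and retracts. Thus $\mathbb{R}\pi_\ast(\mathbb{L}\pi^\ast E) \in \langle \mathbb{R}\pi_\ast G_Y\rangle_{d_Y+1} \subseteq \langle G\rangle_{d_Y+1}$ and $i_\ast(\mathbb{L}i^\ast E \otimes^{\mathbb{L}} C') \in \langle i_\ast G_Z\rangle_{d_Z+1} \subseteq \langle G\rangle_{d_Z+1}$. Rotating the triangle exhibits $E$ inside a distinguished triangle whose other two vertices lie in $\langle G\rangle_{d_Y+1}$ and $\langle G\rangle_{d_Z+1}$; using the standard subadditivity, namely that a triangle $A \to B \to D \to A[1]$ with $B \in \langle G\rangle_m$ and $D \in \langle G\rangle_n$ forces $A \in \langle G\rangle_{m+n}$ (cf. \cite{Rouquier:2008}), I conclude $E \in \langle G\rangle_{(d_Y+1)+(d_Z+1)} = \langle G\rangle_{d_Y+d_Z+2}$.

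Since $E$ was an arbitrary perfect complex, this gives $\operatorname{perf} X \subseteq \langle G\rangle_{d_Y+d_Z+2}$, and \Cref{thm:generatingperf} upgrades it to $D^b_{\operatorname{coh}}(X) = \langle G\rangle_{d_Y+d_Z+2}$, so that $\dim D^b_{\operatorname{coh}}(X) \leq d_Y+d_Z+1$. I expect the only genuine care to lie in the bookkeeping of the level indices and in pinning down the two projection formulas together with the coherence of proper pushforward on bounded complexes; notably, the open set $U$ is not used directly in the argument beyond having guaranteed the scheme-theoretic support of $C$ on $Z$, which is the single hypothesis the proof actually consumes.
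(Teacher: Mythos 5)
Your proposal is correct and follows essentially the same route as the paper: tensor the triangle $\mathcal{O}_X \to \mathbb{R}\pi_\ast\mathcal{O}_Y \to i_\ast C'$ with a perfect complex, apply the projection formulas, use subadditivity of levels with the pushed-forward generators, and conclude via \Cref{thm:generatingperf}. Your observations that perfectness of $E$ is what keeps the outer terms bounded and that $U$ is consumed only through the support hypothesis both match the paper's own remarks.
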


\begin{proof}
    If $D^b_{\operatorname{coh}}(Y)$ or $D^b_{\operatorname{coh}}(Z)$ have infinite Rouquier dimension, then the claim holds, so without loss of generality we may assume these values are finite. Let $C$ be the cone of the natural map $\mathcal{O}_X \to \mathbb{R} \pi_\ast \mathcal{O}_Y$. Since $C$ is scheme-theoretically supported on the closed subscheme $i \colon Z \to X$, there exists $C^\prime\in D^b_{\operatorname{coh}}(Z)$ such that $i_\ast C^\prime \cong C$ in $D^b_{\operatorname{coh}}(X)$. There exists a distinguished triangle in $D^b_{\operatorname{coh}}(X)$:
    \begin{displaymath}
        \mathcal{O}_X \to \mathbb{R}\pi_\ast \mathcal{O}_Y \to i_\ast C^\prime \to \mathcal{O}_X[1].
    \end{displaymath}
    If we tensor with a perfect complex $P$ on $X$, then there is another distinguished triangle in $D^b_{\operatorname{coh}}(X)$:
    \begin{displaymath}
        P \to \mathbb{R}\pi_\ast \mathbb{L}\pi^\ast P \to i_\ast (C^\prime \overset{\mathbb{L}}{\otimes} \mathbb{L}i^\ast P) \to P[1].
    \end{displaymath}
    Choose strong generators $G_1, G_2$ respectively for $D^b_{\operatorname{coh}}(Y), D^b_{\operatorname{coh}}(Z)$ with respective generation times $g_1,g_2$. Observe that $\mathbb{R}\pi_\ast G_1$ finitely builds $\mathbb{R}\pi_\ast \mathbb{L}\pi^\ast P$ in at most $g_1 + 1$ cones whereas $i_\ast G_2$ finitely builds $i_\ast (C^\prime \overset{\mathbb{L}}{\otimes} \mathbb{L}i^\ast P)$ in at most $g_2 + 1$ cones. Putting this together, it follows that $\mathbb{R}\pi_\ast G_1 \oplus i_\ast G_2$ finitely builds $P$ in at most $g_1 + g_2 + 2$ cones. Therefore, \Cref{thm:generatingperf} implies the desired claim. 
\end{proof}

\begin{remark}
    \hfill
    \begin{enumerate}
        \item The closed subscheme $Z$ in the
        \Cref{the:rouquier_dimension_via_support_inequality} need not be
        reduced. For example, if $\pi \colon Y \to X$ is the normalisation of an
        integral curve over a perfect field and $Z$ is reduced, then
        \Cref{the:rouquier_dimension_via_support_inequality} gives $\dim
        D^b_{\operatorname{coh}}(X) \leq 2$. However, there exist curves of
        arbitrarily high Rouquier dimension, see \cite[Example
        2.17]{Bai/Cote:2023}. We will see in \Cref{cor:sharp_bound_rouquier_dimension_curves} below a case in which $Z$
        can be taken reduced. 
        \item Note that if $P \in D^b_{\operatorname{coh}}(X)$, then the objects $\mathbb{R}\pi_\ast \mathbb{L}\pi^\ast P$ and  $i_\ast (C^\prime \overset{\mathbb{L}}{\otimes} \mathbb{L}i^\ast P)$ considered in the proof are not necessarily bounded. This is why \Cref{the:rouquier_dimension_via_support_inequality} is useful. Before proving \Cref{thm:generatingperf} we were only able to obtain a significantly worse bound for $\dim D^b_{\operatorname{coh}}(X)$ in terms of $Z$ and $Y$. 
    \end{enumerate} 
\end{remark}

\begin{remark}
    Recall that the \textit{delta invariant} of a one-dimensional reduced Nagata Noetherian ring $A$ is the length of the $A$-module $B/A$, where $B$ is the normalisation of $A$. Thus, $A$ has delta invariant one if, and only if, $B/A \cong A/\mathfrak{m}_A$ as $A$-modules. The delta invariant is one for the local ring of a curve at a nodal or cuspidal singularity, see \cite[Ex. $\S 3.8.1$]{Hartshorne:1983}.
\end{remark}

\begin{corollary}\label{cor:sharp_bound_rouquier_dimension_curves}
    Let $X$ be an excellent reduced purely one-dimensional Noetherian scheme with normalisation $\pi \colon Y \to X$. If the delta invariant of $\mathcal{O}_{X,x}$ is one for every closed point $x$ in $X$, then there is an inequality:
    \begin{displaymath}
        \dim D^b_{\operatorname{coh}}(X) \leq \dim D^b_{\operatorname{coh}}(Y) + 1. 
    \end{displaymath}
\end{corollary}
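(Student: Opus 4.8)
The plan is to apply \Cref{the:rouquier_dimension_via_support_inequality} to the normalisation $\pi\colon Y\to X$, the key extra input being that the delta invariant hypothesis forces the subscheme $Z$ appearing there to be reduced and $0$-dimensional, so that the correction term $\dim D^b_{\operatorname{coh}}(Z)$ vanishes.

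First I would set up the geometry. As $X$ is excellent and reduced it is Nagata, so the normalisation $\pi\colon Y\to X$ is finite; in particular it is proper and affine, and therefore $\mathbb{R}\pi_\ast\mathcal{O}_Y=\pi_\ast\mathcal{O}_Y$ is coherent. Let $U\subseteq X$ be the regular locus, which is open (by excellence) and dense, and which equals the normal locus since a reduced $1$-dimensional local ring is normal exactly when it is regular; over $U$ the map $\pi$ is an isomorphism. Reducedness of $X$ makes $\mathcal{O}_X\to\pi_\ast\mathcal{O}_Y$ injective, so the cone of the natural map is the cokernel sheaf $C:=\pi_\ast\mathcal{O}_Y/\mathcal{O}_X$, concentrated in degree $0$, and its support is precisely the non-normal locus $X\setminus U$, a finite set of closed points (here ``purely one-dimensional'' guarantees the singular locus is genuinely $0$-dimensional).

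The crux is to identify the scheme structure on $Z=\operatorname{Supp}(C)$. At each closed point $x$ the stalk is $C_x=B/\mathcal{O}_{X,x}$, with $B$ the normalisation of $\mathcal{O}_{X,x}$, which by hypothesis has length equal to the delta invariant $=1$; as recorded in the preceding remark this means $C_x\cong \mathcal{O}_{X,x}/\mathfrak{m}_x=\kappa(x)$, and in particular $\mathfrak{m}_x$ annihilates $C_x$. Hence the annihilator ideal of $C$ is radical and $C$ is scheme-theoretically supported on the reduced closed subscheme $Z\subseteq X\setminus U$ whose underlying space is the set of singular points with its reduced structure. Being reduced, $0$-dimensional and Noetherian, $Z$ is a finite disjoint union of spectra of fields, so $D^b_{\operatorname{coh}}(Z)$ is a finite product of bounded derived categories of fields; every object there is a finite direct sum of shifts of $\mathcal{O}_Z$, i.e. lies in $\langle\mathcal{O}_Z\rangle_1$, giving $\dim D^b_{\operatorname{coh}}(Z)=0$.

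Substituting this into the inequality of \Cref{the:rouquier_dimension_via_support_inequality} produces $\dim D^b_{\operatorname{coh}}(X)\leq\dim D^b_{\operatorname{coh}}(Y)+0+1$, as desired. The step demanding real attention --- and the precise role played by the bound $1$ on the delta invariant --- is the reducedness of $Z$: for a non-reduced Artinian $Z$ the quantity $\dim D^b_{\operatorname{coh}}(Z)$ can be strictly positive (it grows with the Loewy length of the local rings), so without $\delta=1$ one would be left with the weaker conclusion of \Cref{the:rouquier_dimension_via_support_inequality} carrying a possibly nonzero middle term.
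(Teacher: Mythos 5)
Your proposal is correct and follows essentially the same route as the paper: both exhibit the cone of $\mathcal{O}_X \to \mathbb{R}\pi_\ast\mathcal{O}_Y = \pi_\ast\mathcal{O}_Y$ as $i_\ast\mathcal{O}_{\operatorname{Sing}(X)}$ with the reduced induced structure (using $\delta = 1$ to see the cokernel stalks are residue fields) and then apply \Cref{the:rouquier_dimension_via_support_inequality} with $\dim D^b_{\operatorname{coh}}(Z) = 0$. You simply spell out the supporting details (finiteness of $\pi$ via Nagata, openness of the regular locus, reducedness of $Z$) that the paper leaves implicit.
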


\begin{proof}
    The hypotheses give us an exact sequence 
    \begin{displaymath}
        0 \to \mathcal{O}_X \to \pi_\ast \mathcal{O}_Y \to \bigoplus _{x \in \operatorname{Sing}(X)} \kappa(x) \to 0. 
    \end{displaymath}
    Since $\pi$ is finite, $\mathbb{R}\pi_\ast \mathcal{O}_Y = \pi_\ast \mathcal{O}_Y$, and the last term can be written $i_\ast \mathcal{O}_{\operatorname{Sing}(X)}$, where $i \colon \operatorname{Sing}(X) \to X$ is the inclusion of the singular locus with the reduced induced closed subscheme structure. We conclude from \Cref{the:rouquier_dimension_via_support_inequality}.
\end{proof}

\begin{remark}
    Following \Cref{cor:sharp_bound_rouquier_dimension_curves}, if either $X$ is affine or $X$ is projective over a field $k$ and $Y$ is smooth (for instance if $k$ is perfect), then it is known that $\dim D^b_{\operatorname{coh}}(Y) = 1$ \cite{Orlov:2009}, and so we get $\dim D^b_{\operatorname{coh}}(X) \leq 2$. This should be compared to \cite[Theorem 10]{Burban/Drozd:2011}, \cite[Corollary 7.2]{Burban/Drozd:2017}, and \cite[Corollary 3.11]{Burban/Drozd:2017a}.
\end{remark}

\begin{lemma}\label{lem:strong_gen_to_strong_oplus_for_coherent_cohomology}
    Let $X$ be an Noetherian scheme. An object $G$ of $D^b_{\operatorname{coh}}(X)$ is a strong generator if, and only if, there exists an $L\geq 0$ such that $D^{-}_{\operatorname{coh}}(X) \subseteq \overline{\langle G \rangle}_L$ in $D_{\operatorname{Qcoh}}(X)$.
\end{lemma}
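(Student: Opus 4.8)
The plan is to prove the two implications separately, using the identity $\overline{\langle G \rangle}_n \cap D^b_{\operatorname{coh}}(X) = \langle G \rangle_n$ from \Cref{rmk:big_to_small_generation_a_la_neeman} as the bridge between finite and big generation, together with the fact (immediate from that same identity) that $\langle G \rangle_n \subseteq \overline{\langle G \rangle}_n$ whenever $G \in D^b_{\operatorname{coh}}(X)$.

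The backward direction is essentially formal. Assuming $D^-_{\operatorname{coh}}(X) \subseteq \overline{\langle G \rangle}_L$, I would note that $D^b_{\operatorname{coh}}(X) \subseteq D^-_{\operatorname{coh}}(X)$, so intersecting with $D^b_{\operatorname{coh}}(X)$ and invoking \Cref{rmk:big_to_small_generation_a_la_neeman} gives $D^b_{\operatorname{coh}}(X) \subseteq \overline{\langle G \rangle}_L \cap D^b_{\operatorname{coh}}(X) = \langle G \rangle_L$. The reverse inclusion is automatic, whence $\langle G \rangle_L = D^b_{\operatorname{coh}}(X)$ and $G$ is a strong generator.

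For the forward direction I would start from $\langle G \rangle_n = D^b_{\operatorname{coh}}(X)$ and take an arbitrary $E \in D^-_{\operatorname{coh}}(X)$. First I would replace $E$ by a bounded-above complex of coherent sheaves $\mathcal{F}^\bullet$, using the standard equivalence $D^-(\operatorname{Coh} X) \simeq D^-_{\operatorname{coh}}(X)$ valid on a Noetherian scheme. The stupid truncations $\sigma_{\geq -m}\mathcal{F}^\bullet$ are then bounded complexes of coherent sheaves, hence objects of $D^b_{\operatorname{coh}}(X) = \langle G \rangle_n \subseteq \overline{\langle G \rangle}_n$, and the inclusions $\sigma_{\geq -m}\mathcal{F}^\bullet \hookrightarrow \sigma_{\geq -(m+1)}\mathcal{F}^\bullet$ are termwise split monomorphisms whose colimit of complexes is $\mathcal{F}^\bullet$. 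Consequently $E \cong \operatorname{hocolim}_m \sigma_{\geq -m}\mathcal{F}^\bullet$ in $D_{\operatorname{Qcoh}}(X)$, exhibited by the telescope triangle whose two outer terms are the coproduct $\bigoplus_m \sigma_{\geq -m}\mathcal{F}^\bullet$ and whose cone is $E$. Since each level $\overline{\langle G \rangle}_n$ is defined via $\operatorname{Add}$ and is therefore closed under small coproducts, both copies of $\bigoplus_m \sigma_{\geq -m}\mathcal{F}^\bullet$ lie in $\overline{\langle G \rangle}_n$; as the cone of a morphism with source and target in $\overline{\langle G \rangle}_n$ lies in $\overline{\langle G \rangle}_{2n}$ (a standard consequence of \Cref{def:big_generation}, cf. \cite[\S 2.2]{BVdB:2003}), we conclude $E \in \overline{\langle G \rangle}_{2n}$, so $L = 2n$ works.

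The main obstacle is exactly this forward direction, and the delicate point is the direction of the approximating system: the \emph{canonical} truncations of a bounded-above object form an inverse system whose homotopy limit is $E$, which is useless for generation, so one must descend to an honest coherent complex and use \emph{stupid} truncations to obtain a genuine direct system with $\operatorname{hocolim}$ equal to $E$. The only remaining care is the level bookkeeping across the homotopy colimit, and here the closure of each $\overline{\langle G \rangle}_n$ under arbitrary small coproducts --- precisely the feature separating big generation from finite generation --- is what renders the coproducts appearing in the telescope harmless.
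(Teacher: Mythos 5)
Your argument has the same skeleton as the paper's proof: the backward direction is identical (intersect with $D^b_{\operatorname{coh}}(X)$ and apply \Cref{rmk:big_to_small_generation_a_la_neeman}), and the forward direction is the same telescope trick --- write $E \in D^{-}_{\operatorname{coh}}(X)$ as a homotopy colimit of objects already known to lie in $\langle G \rangle_n$, note that the two coproduct terms of the telescope triangle lie in $\overline{\langle G \rangle}_n$ because $\operatorname{Add}$ is closed under small coproducts, and conclude $E \in \overline{\langle G \rangle}_{2n}$. The one genuine difference is the choice of approximating system: the paper writes $E = \operatorname{hocolim}_n P_n$ with $P_n$ perfect (Stacks Project Tag 0DJN, i.e.\ approximation by perfect complexes), whereas you use stupid truncations $\sigma_{\geq -m}\mathcal{F}^\bullet$ of an honest bounded-above complex of coherent sheaves. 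Your variant works, and your closing remark about canonical versus stupid truncations identifies exactly the right subtlety; but it silently relies on the equivalence $D^-(\operatorname{Coh} X) \simeq D^{-}_{\operatorname{coh}}(X)$, i.e.\ on representing an arbitrary object of $D^{-}_{\operatorname{coh}}(X) \subset D_{\operatorname{Qcoh}}(X)$ by a genuine bounded-above complex of coherent sheaves. That representability is standard for Noetherian schemes that are (semi-)separated or have the resolution property, but for an arbitrary Noetherian scheme --- the stated generality --- the comparison of $D^-(\operatorname{QCoh}(X))$ with the bounded-above part of $D_{\operatorname{Qcoh}}(\mathcal{O}_X)$ is not automatic, which is presumably why the paper routes through perfect complexes (whose approximating system exists for any quasi-compact quasi-separated scheme). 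So the proof is essentially the paper's, modulo this one input: either add a justification or hypothesis for the representability step, or swap your $\sigma_{\geq -m}\mathcal{F}^\bullet$ for the perfect complexes $P_n$, which changes nothing else in your argument.
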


\begin{proof}
    If $G$ is a strong generator for $D^b_{\operatorname{coh}}(X)$, then there exists an $N\geq 0$ such that $D^b_{\operatorname{coh}}(X) = \langle G \rangle_N$. Given any object $E$ of $D^{-}_{\operatorname{coh}} (X)$, \cite[\href{https://stacks.math.columbia.edu/tag/0DJN}{0DJN}]{StacksProject} tells us $E$ is a homotopy colimit of perfect objects. This gives us a distinguished triangle:
    \begin{displaymath}
        \bigoplus_n P_n \to \bigoplus_n P_n \to E \to (\bigoplus_n P_n)[1]
    \end{displaymath}
    where each $P_n$ is a perfect complex. We know that each $P_n$ is in $\langle G \rangle_N$, and so, $\bigoplus_n P_n$ is in $\overline{\langle G \rangle}_N$. From the distinguished triangle above, it follows that $E\in \overline{\langle G \rangle}_{2N}$. Next, we can check the converse direction. Suppose there exists an $L\geq 0$ such that $D^{-}_{\operatorname{coh}}(X)$ belongs to $\overline{\langle G \rangle}_L$ in $D_{\operatorname{Qcoh}}(X)$. Then $D^b_{\operatorname{coh}}(X)$ is contained in $\overline{\langle G \rangle}_L$, and so it follows that $D^b_{\operatorname{coh}}(X) = \langle G \rangle_L$ via \Cref{rmk:big_to_small_generation_a_la_neeman}.
\end{proof}

\begin{remark}
    Let $X$ be a Noetherian scheme. If $G$ is a strong generator for $D^b_{\operatorname{coh}}(X)$, then the proof of \Cref{lem:strong_gen_to_strong_oplus_for_coherent_cohomology} tells us the following inequality:
    \begin{displaymath}
        \operatorname{gen.time}(G) +1 \leq \min\{ n \geq 0 : D^{-}_{\operatorname{coh}}(X) \subseteq \overline{\langle G \rangle}_n\} \leq 2 (\operatorname{gen.time}(G) +1).
    \end{displaymath}
\end{remark}

The following lemma has significant generalizations which appear elsewhere, see \cite[Theorem 3.11]{Dey/Lank:2024} and for a noncommutative version refer to \cite[Theorem 3.12]{DeDeyn/Lank/ManaliRahul:2024b}. But we record it anyways for its relevance to our work in action.

\begin{lemma}[Aoki]\label{lem:aoki_proper_surjective_descent}
    Let $\pi \colon Y \to X$ be a proper surjective morphism of Noetherian schemes. If $G$ is a strong generator for $D^b_{\operatorname{coh}}(Y)$, then $\mathbb{R}\pi_\ast G$ is a strong generator for $D^b_{\operatorname{coh}}(X)$.
\end{lemma}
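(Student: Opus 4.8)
The plan is to reduce, via \Cref{lem:strong_gen_to_strong_oplus_for_coherent_cohomology}, to a statement about the ``big'' generation levels $\overline{\langle - \rangle}_\bullet$ in $D_{\operatorname{Qcoh}}$, and then to split the argument into a purely formal part and a geometric part. First observe that $\mathbb{R}\pi_\ast G$ indeed lies in $D^b_{\operatorname{coh}}(X)$ because $\pi$ is proper. Since $G$ is a strong generator of $D^b_{\operatorname{coh}}(Y)$, \Cref{lem:strong_gen_to_strong_oplus_for_coherent_cohomology} gives $L \geq 0$ with $D^-_{\operatorname{coh}}(Y) \subseteq \overline{\langle G \rangle}_L$ inside $D_{\operatorname{Qcoh}}(Y)$. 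By the converse direction of the same lemma, it then suffices to produce a single integer $m'$ with $D^-_{\operatorname{coh}}(X) \subseteq \overline{\langle \mathbb{R}\pi_\ast G \rangle}_{m'}$.

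The formal part is the observation that $\mathbb{R}\pi_\ast \colon D_{\operatorname{Qcoh}}(Y) \to D_{\operatorname{Qcoh}}(X)$ is a triangulated functor that preserves small coproducts (as $\pi$ is qcqs) and retracts, so it carries $\overline{\langle G \rangle}_L$ into $\overline{\langle \mathbb{R}\pi_\ast G \rangle}_L$. Combined with the previous paragraph this yields $\mathbb{R}\pi_\ast D^-_{\operatorname{coh}}(Y) \subseteq \overline{\langle \mathbb{R}\pi_\ast G \rangle}_L$. Using the composition estimate $\overline{\langle \overline{\langle H \rangle}_L \rangle}_m \subseteq \overline{\langle H \rangle}_{Lm}$, the whole lemma is reduced to the following geometric claim: there is an integer $m$ such that every object of $D^-_{\operatorname{coh}}(X)$ lies in $\overline{\langle \mathbb{R}\pi_\ast D^-_{\operatorname{coh}}(Y) \rangle}_m$, i.e. pushforwards from $Y$ generate $D^-_{\operatorname{coh}}(X)$ in a uniformly bounded number of steps.

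I would prove this claim by Noetherian induction on the support, the engine being a generic splitting coming from surjectivity. Over the generic point $\eta$ of an irreducible component of $X$, the fibre $Y_\eta \to \operatorname{Spec}\kappa(\eta)$ is proper and nonempty, hence has a closed point $y$ with $\kappa(y)/\kappa(\eta)$ finite; as $\kappa(\eta)$-vector spaces the inclusion $\kappa(\eta) \hookrightarrow \kappa(y)$ splits, so $\mathcal{O}_{\operatorname{Spec}\kappa(\eta)}$ is a retract of $\mathbb{R}\pi_{\eta \ast}$ of a skyscraper. The key feature is that this is a splitting of modules, so, unlike a trace argument, it is insensitive to the degree $[\kappa(y):\kappa(\eta)]$ and works in all characteristics. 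Spreading $y$ out to its closure $Z \subseteq Y$ and using generic flatness, I obtain a dense open $U \subseteq X$, a coherent sheaf $F$ on $Y$ (namely $\mathcal{O}_Z$), and a splitting exhibiting $\mathcal{O}_U$ as a retract of $(\mathbb{R}\pi_\ast F)|_U$. For an arbitrary $E \in D^-_{\operatorname{coh}}(X)$ the projection formula $\mathbb{R}\pi_\ast(F \otimes^{\mathbb{L}} \mathbb{L}\pi^\ast E) \cong E \otimes^{\mathbb{L}} \mathbb{R}\pi_\ast F$ then shows that $E|_U$ is a retract of the restriction of an object of $\mathbb{R}\pi_\ast D^-_{\operatorname{coh}}(Y)$.

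It remains to pass from this statement over $U$ to a statement over $X$, and here lies the main obstacle: the splitting exists only over $U$, so one must work modulo the subcategory of objects supported on the closed complement $W = X \setminus U$. In the Verdier localization describing $D_{\operatorname{coh}}(U)$ the object $E$ becomes a retract of a pushforward, and lifting this building back to $X$ costs only finitely many further cones against objects supported on $W$, which are controlled by the inductive hypothesis applied to $W$; the two bounds then add to a bound for $X$, and the induction terminates since $X$ is Noetherian. The delicate points, where I expect to spend the real effort, are (i) the boundedness and coherence of the intermediate complexes $F \otimes^{\mathbb{L}} \mathbb{L}\pi^\ast E$, which is handled cleanly by working throughout in $D^-_{\operatorname{coh}}$, on which $\mathbb{L}\pi^\ast$, $\otimes^{\mathbb{L}}$, and proper $\mathbb{R}\pi_\ast$ all act, and (ii) ensuring that the number of extra cones incurred when lifting through the localization is bounded independently of $E$, so that the Noetherian induction produces the single global bound $m$ required above.
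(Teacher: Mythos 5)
Your formal skeleton is exactly the paper's: reduce via \Cref{lem:strong_gen_to_strong_oplus_for_coherent_cohomology} to the big generation classes $\overline{\langle - \rangle}_\bullet$ in $D_{\operatorname{Qcoh}}$, push forward using that $\mathbb{R}\pi_\ast$ preserves small coproducts and retracts, compose the two bounds multiplicatively, and descend back to $\langle - \rangle_\bullet$ via \Cref{rmk:big_to_small_generation_a_la_neeman}. The real difference is in how the geometric input is obtained. The claim you isolate --- a single $m$ with $D^-_{\operatorname{coh}}(X) \subseteq \overline{\langle \mathbb{R}\pi_\ast D^-_{\operatorname{coh}}(Y)\rangle}_m$ --- is precisely a version of \cite[Proposition 4.4]{Aoki:2021}, which the paper takes as a black box: Aoki gives $D^{-}_{\operatorname{coh}}(X) \subseteq \langle \mathbb{R} \pi_\ast \mathcal{O}_Y \overset{\mathbb{L}}{\otimes} D^{-}_{\operatorname{coh}}(X) \rangle_k$, and the projection formula rewrites $\mathbb{R}\pi_\ast\mathcal{O}_Y\overset{\mathbb{L}}{\otimes} E$ as $\mathbb{R}\pi_\ast\mathbb{L}\pi^\ast E$. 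So you are attempting to reprove the cited proposition rather than use it.

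Your d\'evissage sketch of that proposition is the standard strategy, but two of the points you flag as ``delicate'' are genuine gaps rather than routine verifications. First, the generic splitting as written only works for reduced $X$: at a generic point $\eta$ of a non-reduced component, $\mathcal{O}_{X,\eta}$ is an Artinian local ring rather than $\kappa(\eta)$, and splitting $\kappa(\eta)\hookrightarrow\kappa(y)$ as vector spaces does not exhibit $\mathcal{O}_{X,\eta}$ as a retract of $(\mathbb{R}\pi_\ast\mathcal{O}_Z)_\eta$; one needs a preliminary (uniformly bounded) d\'evissage along the nilradical filtration. Second, and more seriously, the assertion that lifting the retraction from $D^b_{\operatorname{coh}}(U)$ back to $X$ costs a number of extra cones against $W$-supported objects that is \emph{independent of $E$} is the technical heart of the whole proposition: a retraction in the Verdier quotient is only represented by a roof, and converting it into a distinguished triangle that builds $E$ itself from $\mathbb{R}\pi_\ast(\cdots)$ and $W$-supported objects in uniformly many steps requires an explicit mechanism (for instance, extending the generic splitting across $W$ after twisting by powers of the ideal sheaf of $W$, so that one obtains an honest endomorphism of $E$ factoring through a pushforward whose cone is supported on $W$). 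As written this step is asserted, not proved. Both issues are handled in Aoki's paper and in the generalization \cite[Theorem 3.11]{Dey/Lank:2024} that the paper points to, so the efficient repair is to do what the paper does and cite the proposition; otherwise you owe a full proof of your uniform lifting claim.
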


\begin{proof}[Proof sketch]
    By \cite[Proposition 4.4]{Aoki:2021}, it can be verified that there exists $k\geq 0$ such that
    \begin{displaymath}
        D^{-}_{\operatorname{coh}}(X) \subseteq \langle \mathbb{R} \pi_\ast \mathcal{O}_Y \overset{\mathbb{L}}{\otimes} D^{-}_{\operatorname{coh}}(X) \rangle_k.
    \end{displaymath}
    Let $E$ be an object of $D^{-}_{\operatorname{coh}} (X)$. By the projection formula \cite[\href{https://stacks.math.columbia.edu/tag/08EU}{Tag 08EU}]{StacksProject}, we have $ \mathbb{R}\pi_\ast \mathcal{O}_Y \overset{\mathbb{L}}{\otimes} E \cong \mathbb{R}\pi_\ast \mathbb{L}\pi^\ast E$. Note that $\mathbb{L}\pi^\ast E$ is in $D^{-}_{\operatorname{coh}}(Y)$. If $G$ is a strong generator for $D^b_{\operatorname{coh}}(Y)$, then \Cref{lem:strong_gen_to_strong_oplus_for_coherent_cohomology} tells us there exists an $n\geq 0$ such that $D^{-}_{\operatorname{coh}}(Y)$ is contained in $\overline{\langle G \rangle}_n$. Hence, we see that $\mathbb{R}\pi_\ast \mathbb{L}\pi^\ast E$ is contained in $\overline{\langle \mathbb{R}\pi_\ast G \rangle}_n$, and so after taking more cones, we see $E$ belongs to $\overline{\langle \mathbb{R}\pi_\ast G \rangle}_{nk}$. \Cref{rmk:big_to_small_generation_a_la_neeman} tells us that $E$ belongs to $\langle \mathbb{R}\pi_\ast G \rangle_{nk}$. In other words, $D^b_{\operatorname{coh}}(X)$ is contained in $\langle \mathbb{R}\pi_\ast G \rangle_{nk}$, which completes the proof.
\end{proof}

\begin{proposition}\label{prop:proper_birational}
    If $\pi \colon Y \to X$ is a proper surjective morphism of Noetherian schemes, then the Rouquier dimension of $D^b_{\operatorname{coh}}(X)$ is bounded above by
    \begin{displaymath}
        (\dim D^b_{\operatorname{coh}}(Y )+ 1) \cdot \min\{ \operatorname{level}_{D(X)}^{\mathbb{R}\pi_\ast G} (\mathcal{O}_X) : G \in D^b_{\operatorname{coh}}(Y) \} -1.
    \end{displaymath}
\end{proposition}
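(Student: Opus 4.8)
The plan is to exhibit a single object whose generation time for $D^b_{\operatorname{coh}}(X)$ we can control, and then appeal to \Cref{thm:generatingperf} to reduce the whole category to its perfect complexes. Write $g := \dim D^b_{\operatorname{coh}}(Y)$ and $L := \min\{\operatorname{level}_{D(X)}^{\mathbb{R}\pi_\ast G}(\mathcal{O}_X) : G \in D^b_{\operatorname{coh}}(Y)\}$. If either is infinite the asserted inequality is vacuous, so I assume both are finite (surjectivity of $\pi$, via a descent result of Aoki type as in the proof of \Cref{lem:aoki_proper_surjective_descent}, is what guarantees $L < \infty$ in practice). Fix $G_1 \in D^b_{\operatorname{coh}}(Y)$ realizing the minimum, so that $\mathcal{O}_X \in \langle \mathbb{R}\pi_\ast G_1 \rangle_L$, and fix a strong generator $G_0$ of $D^b_{\operatorname{coh}}(Y)$ with $\langle G_0 \rangle_{g+1} = D^b_{\operatorname{coh}}(Y)$. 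The claim is that the single object $\mathbb{R}\pi_\ast G_0$ satisfies $\operatorname{perf}X \subseteq \langle \mathbb{R}\pi_\ast G_0 \rangle_{(g+1)L}$.

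The heart of the argument is a computation performed for an arbitrary perfect complex $P$ on $X$. Since $P \overset{\mathbb{L}}{\otimes} (-)$ is an exact endofunctor of $D(X)$, and such functors preserve the generation filtration, applying it to $\mathcal{O}_X \in \langle \mathbb{R}\pi_\ast G_1 \rangle_L$ yields $P \cong P \overset{\mathbb{L}}{\otimes} \mathcal{O}_X \in \langle P \overset{\mathbb{L}}{\otimes} \mathbb{R}\pi_\ast G_1 \rangle_L$. The projection formula identifies $P \overset{\mathbb{L}}{\otimes} \mathbb{R}\pi_\ast G_1 \cong \mathbb{R}\pi_\ast(\mathbb{L}\pi^\ast P \overset{\mathbb{L}}{\otimes} G_1)$. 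Because $\mathbb{L}\pi^\ast P$ is perfect on $Y$ and $G_1 \in D^b_{\operatorname{coh}}(Y)$, the object $\mathbb{L}\pi^\ast P \overset{\mathbb{L}}{\otimes} G_1$ lies in $D^b_{\operatorname{coh}}(Y) = \langle G_0 \rangle_{g+1}$. Properness of $\pi$ ensures $\mathbb{R}\pi_\ast$ carries $D^b_{\operatorname{coh}}(Y)$ into $D^b_{\operatorname{coh}}(X)$, and being exact it too preserves the filtration, so $\mathbb{R}\pi_\ast(\mathbb{L}\pi^\ast P \overset{\mathbb{L}}{\otimes} G_1) \in \langle \mathbb{R}\pi_\ast G_0 \rangle_{g+1}$.

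Combining the two containments through the multiplicativity $\langle \langle - \rangle_a \rangle_b \subseteq \langle - \rangle_{ab}$ of the generation filtration gives $P \in \langle \mathbb{R}\pi_\ast G_0 \rangle_{(g+1)L}$. Since $\mathbb{R}\pi_\ast G_0 \in D^b_{\operatorname{coh}}(X)$ and $D^b_{\operatorname{coh}}(X)$ is a thick subcategory of $D(X)$, this building never leaves $D^b_{\operatorname{coh}}(X)$, so the containment holds inside $D^b_{\operatorname{coh}}(X)$ as well. As $P$ ranged over all perfect complexes, $\operatorname{perf}X \subseteq \langle \mathbb{R}\pi_\ast G_0 \rangle_{(g+1)L}$, and \Cref{thm:generatingperf} upgrades this to $D^b_{\operatorname{coh}}(X) = \langle \mathbb{R}\pi_\ast G_0 \rangle_{(g+1)L}$. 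By the definition of Rouquier dimension this means $\dim D^b_{\operatorname{coh}}(X) \leq (g+1)L - 1$, which is precisely the claimed bound.

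I expect the main point requiring care to be the bookkeeping of the two index conventions in \Cref{def:strong_generators} (number of cones versus generation time) so that the count lands exactly on $(g+1)L - 1$ rather than an off-by-one variant; this is where the precise relation $\langle G_0 \rangle_{g+1} = D^b_{\operatorname{coh}}(Y)$ and the multiplicative behaviour of $\langle - \rangle_n$ must be matched up. A secondary subtlety, already flagged above, is that the level defining $L$ is computed in $D(X)$ while \Cref{thm:generatingperf} lives in $D^b_{\operatorname{coh}}(X)$; this is reconciled by thickness of $D^b_{\operatorname{coh}}(X)$, which keeps the entire construction of $P$ from $\mathbb{R}\pi_\ast G_0$ inside the bounded coherent category. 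Everything else is a routine verification that $P \overset{\mathbb{L}}{\otimes} (-)$ and $\mathbb{R}\pi_\ast$ are exact and hence compatible with the generation filtration.
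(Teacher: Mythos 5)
Your proof is correct and follows essentially the same route as the paper's: tensor the relation $\mathcal{O}_X \in \langle \mathbb{R}\pi_\ast G\rangle_L$ with a perfect complex $P$, apply the projection formula, use that $\mathbb{L}\pi^\ast P \overset{\mathbb{L}}{\otimes} G$ is built from a strong generator of $D^b_{\operatorname{coh}}(Y)$ in $g+1$ steps, and conclude via \Cref{thm:generatingperf}. The only (harmless, and arguably cleaner) difference is that you keep the object $G_1$ realizing the minimum separate from the strong generator $G_0$, whereas the paper uses a single $G$ for both roles; your variant justifies the minimum in the statement slightly more directly.
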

    
\begin{proof}
    By \Cref{lem:aoki_proper_surjective_descent}, if $G$ is a strong generator for $D^b_{\operatorname{coh}}(Y)$, then $\mathbb{R}\pi_\ast G$ is a strong generator for $D^b_{\operatorname{coh}}(X)$. Choose a perfect complex $P$ on $X$. If $n$ is the level of $\mathcal{O}_X$ with respect to $\mathbb{R}\pi_\ast G$ in $D^b_{\operatorname{coh}}(X)$, then tensoring with $P$ tells us $P$ belongs to $\langle \mathbb{R}\pi_\ast G \overset{\mathbb{L}}{\otimes} P \rangle_n$. However, projection formula ensures that $\mathbb{R}\pi_\ast G \overset{\mathbb{L}}{\otimes} P$ is isomorphic to $\mathbb{R}\pi_\ast (G \overset{\mathbb{L}}{\otimes} \mathbb{L}\pi^\ast P)$. Suppose the generation time of $G$ in $D^b_{\operatorname{coh}}(Y)$ is $g$. Then $G \overset{\mathbb{L}}{\otimes} \mathbb{L}\pi^\ast P$ belongs to $\langle G \rangle_{g+1}$, and so, $\mathbb{R}\pi_\ast G \overset{\mathbb{L}}{\otimes} P$ belongs to $\langle \mathbb{R}\pi_\ast G \rangle_{g+1}$. Tying this together, we see that $P$ belongs to $\langle \mathbb{R}\pi_\ast G\rangle_{(g+1)n}$, and so, the Rouquier dimension of $D^b_{\operatorname{coh}}(X)$ is bounded above by $(g+ 1) n -1$ by \Cref{thm:generatingperf}. This completes the proof.
\end{proof}

The following can be seen as an application of \Cref{thm:summandofpullpush} or \Cref{prop:proper_birational}.

\begin{example}\label{ex:derived_splinters}
    Recall that a Noetherian scheme $X$ is a \textbf{derived splinter} (respectively \textbf{birational derived splinter}) if, for every proper (respectively birational) surjective morphism $\pi\colon Y \to X$, the natural map $\mathcal{O}_X \to \mathbb{R}\pi_\ast \mathcal{O}_Y$ splits in the derived category $D_{\operatorname{Qcoh}}(X)$. For $X$ a variety over the complex numbers, this is equivalent to $X$ having rational singularities \cite{Kovacs:2000, Bhatt:2012}. However, for $X$ of prime characteristic, rational singularities are distinct from being a derived splinter, as seen in \cite[Example 2.11]{Bhatt:2012}. For additional background information, please refer to \cite{Bhatt:2012, Lyu:2022}. If $X$ is (birational) derived splinter variety over a field and $X$ admits a resolution of singularities $\widetilde{X} \to X$, \Cref{prop:proper_birational} asserts that the Rouquier dimension of $D^b_{\operatorname{coh}}(X)$ is at most the Rouquier dimension of $D^b_{\operatorname{coh}}(\widetilde{X})$, which is at most $2 \dim X$ by \cite[Proposition 7.9]{Rouquier:2008}. Notably, if \cite[Conjecture 10]{Orlov:2009} holds and $X$ admits a resolution of singularities, then the Rouquier dimension of $D^b_{\operatorname{coh}}(X)$ is precisely $\dim X$. On a different topic, see \cite{Lank/Venkatesh:2024} for a triangulated characterization of derived splinters in terms of generation.
\end{example}

In the proof of part (1) of \Cref{thm:summandofpullpush} we will need the following result of Balmer.

\begin{theorem}[{\cite{Balmer:2016}}]\label{thm:balmer}
    If $f \colon  U \to X$ is a separated \'{e}tale morphism of quasi-compact and quasi-separated schemes, then every perfect complex on $U$ is a direct summand of $\mathbb{L} f^\ast$ of a perfect complex on $X$. 
\end{theorem}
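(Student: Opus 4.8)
The plan is to realize $f$ through the commutative algebra $A := \mathbb{R}f_\ast \mathcal{O}_U$ in $D_{\operatorname{Qcoh}}(X)$ and to exploit that the \'{e}tale and separated hypotheses make $A$ \emph{separable}. Indeed, since $f$ is unramified the diagonal $\Delta_f \colon U \to U \times_X U$ is an open immersion, and since $f$ is separated it is also a closed immersion; hence $\Delta_f$ is an open-and-closed immersion and induces a decomposition $U \times_X U \cong U \sqcup W$. This is the geometric input that lets us split off a given perfect complex, and it is the analogue for \'{e}tale maps of the localization underlying the Thomason--Neeman theorem for open immersions.

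First I would show that every $E$ in $\operatorname{perf}U$ is a direct summand of $\mathbb{L}f^\ast \mathbb{R}f_\ast E$. Since $f$ is flat and quasi-compact quasi-separated (quasi-compactness holds because $U$ is quasi-compact and $X$ is quasi-separated), flat base change along the Cartesian square
\begin{displaymath}
\begin{tikzcd}
U \times_X U \ar[r,"p_2"] \ar[d,"p_1"'] & U \ar[d,"f"] \\
U \ar[r,"f"'] & X
\end{tikzcd}
\end{displaymath}
gives $\mathbb{L}f^\ast \mathbb{R}f_\ast E \cong \mathbb{R}p_{1\ast}\mathbb{L}p_2^\ast E$. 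Under the decomposition $U \times_X U \cong U \sqcup W$ coming from $\Delta_f$, the object $\mathbb{L}p_2^\ast E$ splits as the direct sum of its restrictions to the two components, and on the diagonal component both $p_1$ and $p_2$ restrict to the identity of $U$. Hence $\mathbb{R}p_{1\ast}\mathbb{L}p_2^\ast E \cong E \oplus E'$ for some $E'$, exhibiting $E$ as a retract of $\mathbb{L}f^\ast \mathbb{R}f_\ast E$. Abstractly, this is the statement that $E$, viewed as a module over the separable algebra $A$, is a retract of the free $A$-module on its underlying object.

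It then remains to replace $\mathbb{R}f_\ast E$, which lies only in $D_{\operatorname{Qcoh}}(X)$, by an honest perfect complex $P$ on $X$ with $E$ still a retract of $\mathbb{L}f^\ast P$. Here I would use that $\operatorname{perf}X$ compactly generates $D_{\operatorname{Qcoh}}(X)$ and that $E$ is a compact object of $D_{\operatorname{Qcoh}}(U)$ (perfect complexes are the compact objects on a quasi-compact quasi-separated scheme). Writing $\mathbb{R}f_\ast E$ as a homotopy colimit of perfect complexes $P_i$ and using that $\mathbb{L}f^\ast$ preserves homotopy colimits, the split inclusion $E \to \mathbb{L}f^\ast \mathbb{R}f_\ast E \cong \operatorname{hocolim}_i \mathbb{L}f^\ast P_i$ factors through a finite stage $\mathbb{L}f^\ast P_j$ by compactness; composing the resulting section $E \to \mathbb{L}f^\ast P_j$ with the retraction $\mathbb{L}f^\ast P_j \to E$ exhibits $E$ as a summand of $\mathbb{L}f^\ast P_j$, so one takes $P := P_j$.

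The main obstacle is exactly this last descent. One cannot take $P = \mathbb{R}f_\ast E$ directly, since $f_\ast$ of a perfect complex along a non-finite \'{e}tale map is typically neither perfect nor even bounded; the genuine work lies in approximating it by a perfect complex on $X$ while preserving the retract, which is the content of the (separable) Neeman--Thomason theorem and is where the compact generation of $D_{\operatorname{Qcoh}}(X)$ by perfect complexes together with the separable structure of $A$ are essential. The diagonal argument in the second step, by contrast, is formal once one knows $\Delta_f$ is open-and-closed.
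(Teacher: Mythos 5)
Your two-step strategy is exactly the paper's: the observation that the separated \'{e}tale hypothesis makes the diagonal an open-and-closed immersion, yielding $E$ as a retract of $\mathbb{L}f^\ast\mathbb{R}f_\ast E$ via flat base change, is \Cref{lem:summandofsomething}, and the factorization of the resulting split inclusion through a finite stage of a homotopy colimit by compactness of $E$ is precisely the paper's proof of \Cref{thm:balmer}.

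There is, however, a gap in the one step you yourself flag as carrying the real content. You justify writing $\mathbb{R}f_\ast E$ as $\operatorname{hocolim}_i P_i$ with each $P_i$ perfect on $X$ by appealing to compact generation of $D_{\operatorname{Qcoh}}(X)$ by $\operatorname{perf}X$. Compact generation alone does not deliver this: the standard construction presents an object as the homotopy colimit of a sequence whose successive cones are (possibly infinite) coproducts of shifts of compacts, but the terms of that sequence are not themselves compact, and an arbitrary object of $D_{\operatorname{Qcoh}}(X)$ genuinely need not be a sequential homotopy colimit of perfect complexes (already over a field, an uncountable-dimensional vector space is not a sequential colimit of finite-dimensional ones). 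What rescues the argument is that $\mathbb{R}f_\ast E$ is far from arbitrary: because $f$ is flat and finitely presented and $E$ is perfect, $\mathbb{R}f_\ast E$ is pseudo-coherent relative to the base, and the paper's \Cref{lem:hocolimperf} uses absolute Noetherian approximation to reduce to $X$ of finite type over $\mathbb{Z}$ and then cites the fact that such complexes are homotopy colimits of perfect complexes. You need to supply this (or an equivalent) input explicitly; note also that the separability of $A=\mathbb{R}f_\ast\mathcal{O}_U$, which you invoke at this point, plays no role in this approximation step --- it is only used in the diagonal argument.
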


We give an alternate proof based on the following two lemmas.

\begin{lemma} \label{lem:hocolimperf}
    Let $f \colon U \to X$ be a finitely presented, flat morphism of schemes with $X$ quasi-compact and quasi-separated. Then for any perfect complex $P$ on $U$ there exist perfect complexes $P_n$ on $X$ such that
    \begin{displaymath}
        \mathbb{R} f_\ast P = \operatorname{hocolim}_n P_n.
    \end{displaymath}
\end{lemma}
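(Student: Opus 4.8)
The plan is to reduce to the case where $X$ is Noetherian by absolute Noetherian approximation together with flat base change, and then to exploit that a bounded complex with quasi-coherent cohomology on a Noetherian scheme is a filtered homotopy colimit of objects of $D^b_{\operatorname{coh}}$, each of which is a homotopy colimit of perfect complexes by \cite[\href{https://stacks.math.columbia.edu/tag/0DJN}{0DJN}]{StacksProject}. This is exactly the class of schemes and the tool already used in the proof of \Cref{lem:strong_gen_to_strong_oplus_for_coherent_cohomology}.

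First I would carry out the reduction to the Noetherian case, and this is the only place where the hypotheses \emph{flat} and \emph{finitely presented} are genuinely needed. Since $X$ is quasi-compact and quasi-separated, write it as a cofiltered limit $X = \lim_\lambda X_\lambda$ of schemes of finite type over $\mathbb{Z}$; because $f$ is finitely presented and $P$ is perfect, standard limit arguments spread the data out to a flat finitely presented morphism $f_\lambda \colon U_\lambda \to X_\lambda$ and a perfect complex $P_\lambda$ on $U_\lambda$ pulling back to $f$ and $P$ along $g \colon X \to X_\lambda$ (descending flatness uses that flatness is a finitely presented-open condition). Since $f_\lambda$ is flat, the base-change square is Tor-independent, so flat base change yields $\mathbb{R}f_\ast P \cong \mathbb{L}g^\ast\,\mathbb{R}f_{\lambda\ast}P_\lambda$. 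As $\mathbb{L}g^\ast$ is a left adjoint it commutes with homotopy colimits, and it sends perfect complexes to perfect complexes; hence it suffices to prove the statement for $f_\lambda$, i.e.\ we may assume $X$ is Noetherian.

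With $X$ Noetherian, the object $E := \mathbb{R}f_\ast P$ is bounded: it is bounded below since $P$ is bounded and $\mathbb{R}f_\ast$ is left $t$-exact up to a shift, and bounded above because any quasi-compact quasi-separated morphism has finite cohomological dimension on $D_{\operatorname{Qcoh}}$ (compute $\mathbb{R}f_\ast$ by a \v{C}ech complex for a finite affine cover of $U$). Its cohomology sheaves are quasi-coherent, hence filtered colimits of their coherent subsheaves since $X$ is Noetherian. A routine dévissage on the finitely many nonzero cohomology sheaves then expresses $E$ as a filtered colimit, equivalently (filtered colimits being exact) a filtered homotopy colimit, of objects $E_\mu \in D^b_{\operatorname{coh}}(X)$. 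Each $E_\mu$ lies in $D^-_{\operatorname{coh}}(X)$, so by \cite[\href{https://stacks.math.columbia.edu/tag/0DJN}{0DJN}]{StacksProject} it is itself a homotopy colimit of perfect complexes; composing the two filtered systems exhibits $E$ as a homotopy colimit of perfect complexes on $X$, and applying $\mathbb{L}g^\ast$ transports this back to $X$ in the original setting.

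The main obstacle is the first paragraph: correctly spreading out the flat finitely presented morphism and the perfect complex to a finite-type $\mathbb{Z}$-model and invoking flat base change across the limit. Once one is over a Noetherian base the argument is soft, as it rests on the identification of quasi-coherent cohomology with filtered colimits of coherent sheaves and on \cite[\href{https://stacks.math.columbia.edu/tag/0DJN}{0DJN}]{StacksProject}, both of which require no flatness. I would also remark that the homotopy colimit produced is filtered rather than necessarily indexed by $\mathbb{N}$; this is harmless for the intended application, since a map from a perfect (hence compact) complex into such a homotopy colimit factors through one of the stages.
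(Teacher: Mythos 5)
Your reduction to the case where $X$ is of finite type over $\mathbb{Z}$ --- absolute Noetherian approximation, spreading out $f$ and $P$, and flat base change --- is exactly the paper's first step. For the resulting Noetherian case the paper simply cites \cite[\href{https://stacks.math.columbia.edu/tag/0CRM}{Tag 0CRM}]{StacksProject} transported through the differential graded model of $D_{\operatorname{Qcoh}}$, whereas you propose a two-stage d\'evissage: write $E = \mathbb{R}f_\ast P$ as a filtered colimit of objects of $D^b_{\operatorname{coh}}(X)$, write each of those as a sequential homotopy colimit of perfect complexes via \cite[\href{https://stacks.math.columbia.edu/tag/0DJN}{Tag 0DJN}]{StacksProject}, and compose. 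That is a genuinely different route, but the composition step is where the content of the lemma lives, and as written it is a gap. The inner systems produced by Tag 0DJN are sequential homotopy colimits in the derived category, not colimits of actual complexes; since cones are not functorial, the collection $\{P_{\mu,n}\}$ is not automatically a diagram over any index category, and ``composing the two filtered systems'' requires coherently choosing factorizations $P_{\mu,n} \to P_{\mu',n'}$ of the maps $P_{\mu,n} \to E_{\mu'}$. Moreover ``filtered homotopy colimit'' has no intrinsic meaning in a bare triangulated category, so your closing remark --- that a map from a compact object into the composed system factors through a stage --- presupposes exactly the presentation of $E$ that has not yet been produced. The sequential indexing in the statement is not cosmetic: it is what the proof of \Cref{thm:balmer} uses.

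The gap is fixable, but the fix uses an input you have explicitly set aside. After the reduction, $X$ and $U$ are of finite type over $\mathbb{Z}$, so the sections of $P$ and of $\mathbb{R}f_\ast P$ over a finite affine cover are countable; hence your outer system can be taken to be a sequential colimit of subcomplexes $E_1 \subseteq E_2 \subseteq \cdots$ with coherent terms exhausting an honest bounded complex of quasi-coherent sheaves representing $E$. (Do it this way rather than by d\'evissage on cohomology sheaves: an extension of two filtered colimits is not obviously a filtered colimit of extensions.) One then runs a diagonal argument, using compactness of perfect complexes to choose $Q_k := P_{k,m_k}$ with $m_k$ large enough that $Q_{k-1} \to E_k$ factors through $Q_k$ and that $Q_k \to E_k$ is an isomorphism in degrees $\geq -k$; a computation of cohomology in each fixed degree shows $\operatorname{hocolim}_k Q_k \to E$ is an isomorphism. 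In particular, your claim that flatness and finite presentation are needed only for the reduction is not quite accurate: finite presentation is also what makes the countability, and hence the sequential presentation demanded by the statement, available.
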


\begin{proof}[Proof sketch]
    By absolute Noetherian approximation \cite[Appendix C]{Thomason/Trobaugh:1990} and flat base change, this reduces to the case when $X$ is of finite type over $\mathbb{Z}$. In this case the result follows from \cite[\href{https://stacks.math.columbia.edu/tag/0CRM}{Tag 0CRM}]{StacksProject} via the equivalence \cite[\href{https://stacks.math.columbia.edu/tag/09M5}{Tag 09M5}]{StacksProject}.
\end{proof}

\begin{lemma}\label{lem:summandofsomething}(\cite[proof of Theorem 3.5]{Balmer:2016})
    Let $f \colon U \to X$ be a quasi-compact and separated \'{e}tale morphism of schemes. Then every object $K \in D_{\operatorname{Qcoh}}(X)$ is a direct summand of the object $\mathbb{L}f^\ast \mathbb{R} f_\ast K$.
\end{lemma}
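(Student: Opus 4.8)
The plan is to reproduce Balmer's base-change argument, taking $K$ to be an object of $D_{\operatorname{Qcoh}}(U)$ (so that $\mathbb{R}f_\ast K$ is defined). First I would form the cartesian square
\[
\begin{tikzcd}
U \times_X U \ar[r, "p_2"] \ar[d, "p_1"'] & U \ar[d, "f"] \\
U \ar[r, "f"'] & X
\end{tikzcd}
\]
with projections $p_1,p_2$. Since $f$ is \'{e}tale it is flat, so the square is Tor-independent and flat base change in $D_{\operatorname{Qcoh}}$ supplies a natural isomorphism $\mathbb{L}f^\ast \mathbb{R}f_\ast K \cong \mathbb{R}(p_1)_\ast \mathbb{L}(p_2)^\ast K$. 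The quasi-compactness and separatedness of $f$ are exactly what is needed for $p_1$ to be quasi-compact and quasi-separated, so that this derived pushforward is well behaved.

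The geometric heart of the argument is that the diagonal $\Delta \colon U \to U \times_X U$ is clopen: it is an open immersion because $f$ is \'{e}tale (equivalently, unramified), and it is a closed immersion because $f$ is separated. Hence $U \times_X U \cong U \sqcup W$, where $U$ is identified with the image of $\Delta$ and $W$ is the complementary open-closed subscheme. Writing $j \colon U \hookrightarrow U \times_X U$ for $\Delta$ and $k \colon W \hookrightarrow U \times_X U$ for the complementary immersion, the fact that the derived category of a disjoint union is a product gives a canonical decomposition $M \cong \mathbb{R}j_\ast j^\ast M \oplus \mathbb{R}k_\ast k^\ast M$ for every $M \in D_{\operatorname{Qcoh}}(U \times_X U)$.

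I would then apply this with $M = \mathbb{L}(p_2)^\ast K$ and push forward along $p_1$. Using $\mathbb{R}(p_1)_\ast \mathbb{R}j_\ast \cong \mathbb{R}(p_1 \circ j)_\ast$ and $j^\ast \mathbb{L}(p_2)^\ast \cong \mathbb{L}(p_2 \circ j)^\ast$, together with the identities $p_1 \circ j = p_2 \circ j = \mathrm{id}_U$, the diagonal summand collapses to $K$ itself. This yields $\mathbb{L}f^\ast \mathbb{R}f_\ast K \cong K \oplus \mathbb{R}(p_1\circ k)_\ast \mathbb{L}(p_2 \circ k)^\ast K$, exhibiting $K$ as a direct summand of $\mathbb{L}f^\ast \mathbb{R}f_\ast K$.

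The step I expect to require the most care is the base-change isomorphism: one must invoke flat (Tor-independent) base change for the derived pushforward in the appropriate generality on $D_{\operatorname{Qcoh}}$, which is precisely where flatness and the quasi-compact separated hypotheses on $f$ enter. Once that isomorphism is in hand, the remaining manipulations are formal consequences of the adjunctions and of the clopen splitting of $U \times_X U$, so no further obstacle should arise.
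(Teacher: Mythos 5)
Your proof is correct and takes essentially the same route as the paper's: both exploit that the diagonal of a separated \'{e}tale morphism is a clopen immersion into $U \times_X U$ and then conclude by flat base change, the only (cosmetic) difference being that the paper packages the splitting by tensoring $\mathbb{L}\operatorname{pr}_1^\ast K$ with the direct summand $\mathbb{R}\Delta_\ast \mathcal{O}_U$ of $\mathcal{O}_{U\times_X U}$ and invoking the projection formula, whereas you use the product decomposition of $D_{\operatorname{Qcoh}}$ of a disjoint union. You were also right to read the statement with $K \in D_{\operatorname{Qcoh}}(U)$; the ``$X$'' there is evidently a typo, since otherwise $\mathbb{R}f_\ast K$ is not defined.
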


\begin{proof}
    The diagonal $\Delta \colon U \to U \times _X U$ is both an open and closed immersion. Thus $\mathbb{R}\Delta_\ast \mathcal{O}_U$ is a direct summand
    of $\mathcal{O}_{U \times _X U}$. But then $\mathbb{R} \operatorname{pr}_{2,\ast}( \mathbb{L} \operatorname{pr}_{1}^\ast K \otimes^{\mathbb{L}} \mathbb{R}\Delta_\ast \mathcal{O}_U)$ 
    is a direct summand of $\mathbb{R}\operatorname{pr}_{2,\ast} (\mathbb{L}\operatorname{pr}_{1}^\ast K \otimes^{\mathbb{L}} \mathcal{O}_{U \times _X U})$. The first object is $K$ and the second object is 
    \begin{displaymath}
        \mathbb{R} \operatorname{pr}_{2,\ast} \mathbb{L} \operatorname{pr}_{1}^\ast K = \mathbb{L}f^\ast \mathbb{R}f_\ast K
    \end{displaymath}
    by flat base change.
\end{proof}

\begin{proof}[Proof of \Cref{thm:balmer}]
    By \Cref{lem:summandofsomething} there are maps $E \xrightarrow{i} \mathbb{L}f^\ast \mathbb{R}f_\ast E \xrightarrow{p} E$ whose composition is the identity. By \Cref{lem:hocolimperf} the middle object can be written $\operatorname{hocolim}_n \mathbb{L}f^\ast P_n$ with $P_n$ a perfect complex on $X$. But since $E$ is perfect, the map $i$ factors through some $\mathbb{L}f^\ast P_n$ and we are done.
\end{proof}

Now we are in a position to prove \Cref{thm:summandofpullpush}. We choose to give direct arguments, but we leave it to the reader to check that it can also be proven by applying \Cref{lem:summandofapprox}.

\begin{proof}[Proof of \Cref{thm:summandofpullpush}]
    (1) Choose an object $E$ in $D^b_{\operatorname{coh}}(U)$. By \cite[Theorem 4.1]{Lipman/Neeman:2007}, there exist a perfect complex $P$ on $U$ and an integer $a$ such that $E = \tau_{\geq a}(P)$. Moreover, \Cref{thm:balmer} guarantees there exists another perfect complex $Q$ on $X$ such that $P$ is a direct summand of $\mathbb{L} f^\ast Q$. Since $\tau_{\geq a}$ is a functor, we see that $E = \tau_{\geq a}(P)$ is a direct summand of $\tau_{\geq a}(\mathbb{L} f^\ast Q) = \mathbb{L} f^\ast(\tau_{\geq a}Q) $, and so we are done.

    (2) Let $E \in D^b_{\operatorname{coh}}(X)$. Assume $E \in D^{\geq a}(X)$. Choose an integer $N>0$ such that $\mathbb{R}f_\ast (D^{\leq 0}_{\operatorname{coh}}(Y)) \subset D^{\leq N}_{\operatorname{coh}}(X)$. 
    Set $K = \tau_{\geq a - N} \mathbb{L}f^\ast E \in D^b_{\operatorname{coh}}(Y)$. Write $F = \mathbb{R}f_\ast \mathbb{L}f^\ast E$ and let $F\to F^\prime$ be the obvious map $F \to \mathbb{R}f_\ast K$. Then $F \to F^\prime$ is an isomorphism in degrees $\geq a$, so by \Cref{lem:samehoms} there is a unique map $F^\prime \to E$ making the triangle
    \begin{displaymath}
        \begin{tikzcd}
            F \ar[r] \ar[d] &E \\
            F^\prime \ar[ur]
        \end{tikzcd}
    \end{displaymath}
    commute, where $F \to E$ is any splitting of $E \to F$. One verifies easily that the composition $E \to F \to F^\prime \to E$ is the identity.
\end{proof}

Next up, we study the behavior of Rouquier dimension along \'{e}tale morphisms with affine target, and in particular, prove \Cref{thm:etale_descent_rouquier_dimension}. The subsequent two statements will use the following setup.

\begin{setup}\label{setup:etale_setup}
    Let $f \colon U \to X$ be an \'{e}tale morphism of Noetherian affine schemes which admits a factorization $U \xrightarrow{j} \overline{U} \xrightarrow{g} X$ such that $g$ is finite and $j$ is a principal open immersion, that is, the inclusion of an open subscheme of the form $D(b)$ for some $b \in H^0(\overline{U}, \mathcal{O}_{\overline{U}})$. Let $G$ be an object of $D^b_{\operatorname{coh}}(U)$. Choose an object $\overline{G}$ in $D^b_{\operatorname{coh}}(\overline{U})$ such that $\mathbb{L}j^\ast \overline{G} = G$. Consider the following diagram:
    \begin{displaymath}
        \cdots \xrightarrow{b} \overline{G} \xrightarrow{b} \overline{G} \xrightarrow{b} \cdots \xrightarrow{b} \overline{G} 
    \end{displaymath}
    in $D^b_{\operatorname{coh}}(\overline{U})$. Applying the functor $\mathbb{R}g_\ast$ gives a diagram in $D^b_{\operatorname{coh}}(X)$:
    \begin{equation}
        \label{eq:delignediagram}
        \cdots \to \mathbb{R}g_\ast \overline{G} \to \mathbb{R}g_\ast \overline{G} \to \cdots \to \mathbb{R}g_\ast \overline{G}
    \end{equation}
\end{setup}

Deligne proves in \cite[Appendix by P. Deligne]{Hartshorne:1966} that the assignment $G \mapsto $ the pro-system (\ref{eq:delignediagram}) is adjoint to the functor $\mathbb{L}f^\ast$ in the following precise sense. 
 
\begin{lemma}[Deligne's formula]\label{lem:deligne}
    Assume \Cref{setup:etale_setup}. For an object $K$ of $D^b_{\operatorname{coh}}( X)$, there is a canonical isomorphism
    \begin{displaymath}
        \operatorname{colim}_n \operatorname{Hom}_{D^b_{\operatorname{coh}}( X)} (\mathbb{R}g_\ast \overline{G}, K) = \operatorname{Hom}_{D^b_{\operatorname{coh}}( U)}(G, \mathbb{L}f^\ast K),
    \end{displaymath}
    where the colimit is over Diagram~\ref{eq:delignediagram}. This isomorphism is defined as follows. The $n^{th}$ map 
    \begin{displaymath}
        \operatorname{Hom}_{D^b_{\operatorname{coh}}( X)} (\mathbb{R}g_\ast \overline{G}, K) \to \operatorname{Hom}_{D^b_{\operatorname{coh}}( U)}(G, \mathbb{L}f^\ast K) 
    \end{displaymath}
    takes $\varphi$ to the composition
    \begin{equation}
    \label{eq:deligneprecise}
        G \xrightarrow{1/b^n} G \to \mathbb{L}f^\ast \mathbb{R}g_\ast \overline{G} \xrightarrow{\mathbb{L}f^\ast(\varphi)} \mathbb{L}f^\ast K,
    \end{equation}
    where $G \to \mathbb{L}f^\ast \mathbb{R}g_\ast \overline{G}$ is $\mathbb{L}j^\ast$ applied to the unit map $\overline{G} \to \mathbb{L}g^\ast \mathbb{R}g_\ast \overline{G}$. 
\end{lemma}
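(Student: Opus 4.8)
The plan is to reduce everything to commutative algebra and then chain together a localization identity with an adjunction for the finite morphism $g$. Write $X = \operatorname{Spec} R$ and $\overline{U} = \operatorname{Spec} A$ with $A$ a module-finite $R$-algebra, so that $U = \operatorname{Spec} A_b$ and $\overline{G}$ corresponds to a pseudo-coherent complex over $A$. Under this dictionary $\mathbb{R}g_\ast$ is restriction of scalars along $R \to A$ (exact, since $g$ is finite, hence affine), $\mathbb{L}j^\ast$ is localization at $b$, and $\mathbb{L}f^\ast K = K \otimes_R^{\mathbb{L}} A_b$. The transition maps in Diagram~\eqref{eq:delignediagram} are then multiplication by $b$ on the restriction of scalars of $\overline{G}$.

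First I would rewrite the left-hand side using the restriction/coinduction adjunction for $g$. Since restriction of scalars $\mathbb{R}g_\ast$ is left adjoint to coinduction $g^! K := \mathbb{R}\operatorname{Hom}_R(A, K)$, there is a natural isomorphism $\operatorname{Hom}_{D^b_{\operatorname{coh}}(X)}(\mathbb{R}g_\ast \overline{G}, K) \cong \operatorname{Hom}_{D(A)}(\overline{G}, g^! K)$, under which precomposition with $\mathbb{R}g_\ast(b)$ becomes multiplication by $b$ (here $g^!K$ may be unbounded, but this is harmless as we only map out of the bounded object $\overline{G}$). Hence the colimit over the Deligne diagram becomes $\operatorname{colim}_n \operatorname{Hom}_{D(A)}(\overline{G}, g^! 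K)$ with transition maps given by multiplication by $b$.

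Next I would recognize this colimit as a localization. Because $\overline{G}$ is pseudo-coherent it is represented by a bounded-above complex of finite free $A$-modules, so $\mathbb{R}\operatorname{Hom}$ commutes with the flat base change $A \to A_b$; taking $H^0$ and using exactness of localization gives $\operatorname{Hom}_{D(A)}(\overline{G}, g^!K) \otimes_A A_b \cong \operatorname{Hom}_{D(A_b)}(G, (g^!K)_b)$. Since localizing a module at $b$ is precisely the colimit of the multiplication-by-$b$ system, and the $n$th structure map to the localization divides by $b^n$, this identifies the colimit with $\operatorname{Hom}_{D^b_{\operatorname{coh}}(U)}(G, (g^! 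K)_b)$ and accounts for the factor $1/b^n$ appearing in \eqref{eq:deligneprecise}.

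The main obstacle is the final identification $(g^! K)_b \cong \mathbb{L}f^\ast K$, and this is exactly where the hypothesis that $f$ is \emph{étale}, rather than merely that $g$ is finite, enters. Writing $f = g \circ j$ and using $j^! = j^\ast$ for the open immersion, we have $f^! K = j^\ast g^! K = (g^! K)_b$; on the other hand $f$ étale provides the canonical isomorphism $f^! \cong \mathbb{L}f^\ast$ (the relative dualizing complex of an étale morphism is trivial), whence $(g^!K)_b \cong \mathbb{L}f^\ast K$. Assembling the three isomorphisms yields the asserted bijection. The remaining, and genuinely fiddly, task is to verify that the isomorphism produced in this abstract way agrees with the explicit composite in \eqref{eq:deligneprecise}: this is a naturality check comparing the unit of $\mathbb{L}g^\ast \dashv \mathbb{R}g_\ast$ restricted to $U$ with the coinduction unit under the étale identification $f^! \cong \mathbb{L}f^\ast$, together with the bookkeeping of the $1/b^n$ coming from the localization colimit.
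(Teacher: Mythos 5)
Your proposal is correct and follows essentially the same route as the paper's proof sketch: duality (coinduction) for the finite morphism $g$, identification of the colimit over the multiplication-by-$b$ system with restriction to the principal open $D(b)$, and the isomorphism $f^! \cong \mathbb{L}f^\ast$ for étale $f$ to conclude $(g^!K)|_U \cong \mathbb{L}f^\ast K$. The only cosmetic difference is that you carry out the middle step by localizing the Hom-module of a finite free resolution of the pseudo-coherent complex $\overline{G}$ (where bounded-belowness of $g^!K$ is the relevant hypothesis), whereas the paper writes $\mathbb{R}j_\ast\mathbb{L}j^\ast L$ as a homotopy colimit and uses adjunction for $j$; both likewise defer the same final naturality check against the explicit formula \eqref{eq:deligneprecise}.
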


\begin{proof}[Proof sketch]
    The key point is that for an \'{e}tale morphism, $f^! = \mathbb{L}f^\ast$. This means that there is a canonical isomorphism
    \begin{displaymath}
        \mathbb{L}f^\ast K = \mathbb{L}j^\ast (g^!(K))
    \end{displaymath}
    valid for $K \in D^b_{\operatorname{coh}}(X)$, see \cite[\href{https://stacks.math.columbia.edu/tag/0DWE}{Tag 0DWE}]{StacksProject}. Further, we have for any object $L \in D_{\operatorname{Qcoh}}(\overline{U})$,
    \begin{displaymath}
        \mathbb{R}j_\ast \mathbb{L}j^\ast L = L \overset{\mathbb{L}}{\otimes} \mathbb{R}j_\ast \mathcal{O}_U = L \overset{\mathbb{L}}{\otimes} (\operatorname{hocolim} (\mathcal{O}_{\overline{U}} \xrightarrow{b} \mathcal{O}_{\overline{U}} \xrightarrow{b} \cdots))
        = \operatorname{hocolim} (L \xrightarrow{b} L \xrightarrow{b} \cdots),
    \end{displaymath}
    and thus for any perfect complex $P$ on $\overline{U}$,
    \begin{displaymath}
       \operatorname{Hom}_{D(\mathcal{O}_U)} (\mathbb{L}j^\ast P, \mathbb{L}j^\ast L) = \operatorname{Hom}(P, \mathbb{R}j_\ast \mathbb{L}j^\ast L ) = \operatorname{colim}_n \operatorname{Hom}(P, L)
    \end{displaymath}
    (colimit taken over the system $L \xrightarrow{b} L \xrightarrow{b} \cdots$). If $L \in D^{+}_{\operatorname{Qcoh}}(\overline{U})$ then this formula is also valid for $P \in D^b_{\operatorname{coh}}(\overline{U}),$ as follows by approximating $P$ suitably by a perfect complex. Putting everything together, we get
    \begin{displaymath}
        \begin{aligned}
            \operatorname{colim}_n \operatorname{Hom}_{D^b_{\operatorname{coh}}( X)} (\mathbb{R}g_\ast \overline{G}, K) &= \operatorname{colim}_n \operatorname{Hom}_{D^b_{\operatorname{coh}}( \overline{U})} (\overline{G}, g^!(K)) 
            \\&= \operatorname{Hom}_{D^b_{\operatorname{coh}}( U)} (G, \mathbb{L}j^\ast(g^!(K))) \\&=  \operatorname{Hom}_{D^b_{\operatorname{coh}}( U)} (G, \mathbb{L}f^\ast K),
        \end{aligned}
    \end{displaymath}
    where the first equality is duality for a finite morphism.
\end{proof}

\begin{lemma}\label{lem:ghostlocaletale}
    Assume \Cref{setup:etale_setup}. If $\psi \colon K \to L$ is an $\mathbb{R}g_\ast \overline{G}$-ghost map in $D^b_{\operatorname{coh}}( X)$, then $\mathbb{L}f^\ast(\psi)$ is a $G$-ghost map in $D^b_{\operatorname{coh}}( U)$. 
\end{lemma}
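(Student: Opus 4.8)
The claim is that $\mathbb{L}f^\ast$ carries $\mathbb{R}g_\ast \overline{G}$-ghost maps in $D^b_{\operatorname{coh}}(X)$ to $G$-ghost maps in $D^b_{\operatorname{coh}}(U)$. Unwinding the definitions, I must show: if $\psi \colon K \to L$ has the property that $\psi \circ (-) = 0$ on $\operatorname{Hom}_{D^b_{\operatorname{coh}}(X)}(\mathbb{R}g_\ast \overline{G}[n], K)$ for every $n$, then $\mathbb{L}f^\ast(\psi) \circ (-) = 0$ on $\operatorname{Hom}_{D^b_{\operatorname{coh}}(U)}(G[n], \mathbb{L}f^\ast K)$ for every $n$. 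So I fix an arbitrary integer $n$ and an arbitrary map $\alpha \colon G[n] \to \mathbb{L}f^\ast K$, and I must show that $\mathbb{L}f^\ast(\psi) \circ \alpha = 0$.

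**The plan: pull $\alpha$ back through Deligne's formula.**
The whole point of \Cref{setup:etale_setup} and \Cref{lem:deligne} is that maps $G[n] \to \mathbb{L}f^\ast K$ on the $U$-side arise, up to the colimit, from maps $\mathbb{R}g_\ast\overline{G}[n] \to K$ on the $X$-side. Applying Deligne's formula (with $G$ replaced by its shift $G[n]$, or equivalently bookkeeping the shift throughout — note $\overline{G}[n]$ descends to $G[n]$ under $\mathbb{L}j^\ast$), the group $\operatorname{Hom}_{D^b_{\operatorname{coh}}(U)}(G[n], \mathbb{L}f^\ast K)$ is the colimit over Diagram~\ref{eq:delignediagram} of $\operatorname{Hom}_{D^b_{\operatorname{coh}}(X)}(\mathbb{R}g_\ast\overline{G}[n], K)$. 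Hence my chosen $\alpha$ is the image of some $\varphi \colon \mathbb{R}g_\ast\overline{G}[n] \to K$ under the $n$-th structure map; explicitly, by \eqref{eq:deligneprecise}, $\alpha$ equals the composition $G[n] \xrightarrow{1/b^m} G[n] \to \mathbb{L}f^\ast\mathbb{R}g_\ast\overline{G}[n] \xrightarrow{\mathbb{L}f^\ast(\varphi)} \mathbb{L}f^\ast K$ for some $m$.

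**Using the ghost hypothesis on the $X$-side.**
Now I post-compose with $\mathbb{L}f^\ast(\psi)$. Because $\mathbb{L}f^\ast$ is a functor and $\alpha$ factors through $\mathbb{L}f^\ast(\varphi)$, I get
\begin{displaymath}
    \mathbb{L}f^\ast(\psi) \circ \alpha = \big(\mathbb{L}f^\ast(\psi \circ \varphi)\big) \circ \big(\text{the }1/b^m\text{-part}\big),
\end{displaymath}
so $\mathbb{L}f^\ast(\psi)\circ \alpha$ is, under the same Deligne identification, the image of $\psi \circ \varphi \colon \mathbb{R}g_\ast\overline{G}[n] \to L$. But $\psi \circ \varphi$ is precisely $\psi$ pre-composed with a map out of $\mathbb{R}g_\ast\overline{G}[n]$, and the hypothesis that $\psi$ is $\mathbb{R}g_\ast\overline{G}$-ghost says exactly that every such composite vanishes: $\psi \circ \varphi = 0$. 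Since the Deligne isomorphism sends $0$ to $0$, I conclude $\mathbb{L}f^\ast(\psi) \circ \alpha = 0$. As $n$ and $\alpha$ were arbitrary, $\mathbb{L}f^\ast(\psi)$ is $G$-ghost.

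**The main obstacle.**
The crux is the compatibility of Deligne's formula with post-composition — that is, checking that the natural transformation supplied by \Cref{lem:deligne} is natural in the target variable $K$, so that applying $\mathbb{L}f^\ast(\psi)$ on the $U$-side corresponds to applying $\psi \circ (-)$ on the $X$-side before passing to the colimit. This should be immediate from the explicit description in \eqref{eq:deligneprecise}, since $\psi$ only interacts with the final arrow $\mathbb{L}f^\ast(\varphi)$ and commutes past the $1/b^m$ factor and the unit map, which do not involve $K$. The one bookkeeping point to handle carefully is the shift by $n$: I should either invoke \Cref{lem:deligne} with $\overline{G}$ replaced by $\overline{G}[n]$ (noting $\mathbb{L}j^\ast(\overline{G}[n]) = G[n]$ and $\mathbb{R}g_\ast(\overline{G}[n]) = (\mathbb{R}g_\ast\overline{G})[n]$), or simply observe that the ghost condition is shift-stable so handling all shifts at once is harmless.
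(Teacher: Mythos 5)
Your argument is correct and is essentially the paper's proof: both reduce the claim to Deligne's formula (\Cref{lem:deligne}), expressing an arbitrary map $G[n] \to \mathbb{L}f^\ast K$ as the image of some $\varphi \colon \mathbb{R}g_\ast\overline{G}[n] \to K$ and then using that post-composition with $\psi$ is compatible with the explicit formula \eqref{eq:deligneprecise}. The only difference is that you argue directly while the paper argues by contrapositive, which is immaterial.
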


\begin{proof}
    Assume $\mathbb{L}f^\ast(\psi) \colon \mathbb{L}f^\ast K \to
    \mathbb{L}f^\ast L$ is not a $G$-ghost map. After replacing, if necessary,
    $\psi$ by some shift $\psi[i]$, this means there is a nonzero composition $G
    \to \mathbb{L}f^\ast K\to \mathbb{L}f^\ast L$. By \Cref{lem:deligne},
    there exist an integer $n$ and a map $\varphi \colon \mathbb{R}
    g_\ast \overline{G} \to K$ such that $G \to \mathbb{L}f^\ast K$ is equal to
    a composition of the form in Diagram~\ref{eq:deligneprecise}. But then from
    this diagram and the fact that $G \to \mathbb{L}f^\ast K \to
    \mathbb{L}f^\ast L$ is nonzero, we see that $\mathbb{L}f^\ast(\psi) \circ
    \mathbb{L}f^\ast(\varphi) \neq 0$, and hence $\psi \circ \varphi \neq 0$.
    This tells us $\psi$ is not a $\mathbb{R}g_\ast \overline{G}$-ghost map.
\end{proof}

\begin{lemma}\label{lem:ghostaffinepullback}
    Let $f\colon U \to X$ be a flat morphism of Noetherian affine schemes. Let $G, K, L \in D^b_{\operatorname{coh}}(X)$ be objects and $\psi \colon K \to L$ a map. Then:
    \begin{enumerate}
        \item If $\psi$ is a $G$-ghost, then $\mathbb{L}f^\ast(\psi)$ is an $\mathbb{L}f^\ast(G)$-ghost; and the converse holds if $f$ is surjective.
        \item If $f$ is surjective and $\mathbb{L}f^\ast(\psi) = 0$, then $\psi = 0$. 
    \end{enumerate}
\end{lemma}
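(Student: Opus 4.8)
The plan is to prove \Cref{lem:ghostaffinepullback} by exploiting the key fact that for a flat morphism of affine schemes, $\mathbb{L}f^\ast$ and $\mathbb{R}f_\ast$ interact well with $\operatorname{Hom}$-groups. Since $U$ and $X$ are affine, I can work with $D^b_{\operatorname{coh}}$ of the corresponding Noetherian rings, say $X = \operatorname{Spec}(R)$ and $U = \operatorname{Spec}(S)$ with $R \to S$ flat. The crucial tool is the adjunction isomorphism $\operatorname{Hom}_{D(S)}(\mathbb{L}f^\ast C, \mathbb{L}f^\ast D) \cong \operatorname{Hom}_{D(R)}(C, \mathbb{R}f_\ast \mathbb{L}f^\ast D)$, together with the fact that flatness makes $\mathbb{R}f_\ast \mathbb{L}f^\ast D$ computable as $D \overset{\mathbb{L}}{\otimes}_R S$ on the level of the underlying module of $S$.

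First I would prove part (1), the forward direction. Suppose $\psi\colon K \to L$ is a $G$-ghost, so that $\psi \circ \varphi = 0$ for every $\varphi\colon G[n] \to K$. To show $\mathbb{L}f^\ast(\psi)$ is an $\mathbb{L}f^\ast(G)$-ghost, I take an arbitrary map $\alpha\colon \mathbb{L}f^\ast(G)[n] \to \mathbb{L}f^\ast K$ and must show $\mathbb{L}f^\ast(\psi)\circ \alpha = 0$. By the adjunction $\operatorname{Hom}_{D(S)}(\mathbb{L}f^\ast G[n], \mathbb{L}f^\ast K) = \operatorname{Hom}_{D(R)}(G[n], \mathbb{R}f_\ast\mathbb{L}f^\ast K)$, the map $\alpha$ corresponds to some $\widetilde{\alpha}\colon G[n] \to \mathbb{R}f_\ast\mathbb{L}f^\ast K$. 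The point is that the composite $\mathbb{L}f^\ast(\psi) \circ \alpha$ corresponds under adjunction to $\mathbb{R}f_\ast\mathbb{L}f^\ast(\psi) \circ \widetilde{\alpha}$, and since $f$ is flat the unit $K \to \mathbb{R}f_\ast\mathbb{L}f^\ast K$ is naturally the map $K \to K\otimes_R^{\mathbb{L}} S$; naturality of this unit with respect to $\psi$ lets me rewrite things so that the $G$-ghost hypothesis on $\psi$ forces the composite to vanish. The cleanest route is to observe that $\mathbb{R}f_\ast\mathbb{L}f^\ast K$ is built from $K$ using the (possibly infinitely generated, but flat) $R$-module $S$, and to reduce the vanishing of $\mathbb{R}f_\ast\mathbb{L}f^\ast(\psi)\circ\widetilde{\alpha}$ to vanishing of $\psi$ composed with maps out of $G$.

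For the converse in part (1) and for part (2), surjectivity of the flat map $f$ becomes essential: a surjective flat map of affine schemes is faithfully flat, so $R \to S$ is faithfully flat. For part (2), if $\mathbb{L}f^\ast(\psi) = 0$, then since $\mathbb{L}f^\ast(\psi) = \psi \overset{\mathbb{L}}{\otimes}_R S$ and $S$ is faithfully flat over $R$, each map $H^i(\psi)\otimes_R S = H^i(\psi\otimes_R^{\mathbb{L}} S)$ vanishes, and faithful flatness implies $H^i(\psi) = 0$ for all $i$; but a morphism in $D^b_{\operatorname{coh}}$ need not vanish just because all its cohomology maps do. So instead I would argue directly on morphism groups: $\operatorname{Hom}_{D(R)}(K, L)$ is an $R$-module, and $\operatorname{Hom}_{D(S)}(\mathbb{L}f^\ast K, \mathbb{L}f^\ast L) = \operatorname{Hom}_{D(R)}(K,L)\otimes_R S$ by flatness (since $K$ is pseudo-coherent and we can replace it by a bounded-above complex of finite free modules, making the Hom-module computation a tensor with $S$). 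Faithful flatness of $S/R$ then forces $\psi = 0$ from $\psi\otimes_R 1 = 0$. The converse of part (1) follows by the same reasoning applied to all the composites $\psi\circ\varphi$. The main obstacle I anticipate is establishing the base-change identity $\operatorname{Hom}_{D(S)}(\mathbb{L}f^\ast K, \mathbb{L}f^\ast L) \cong \operatorname{Hom}_{D(R)}(K,L)\otimes_R S$ cleanly for all degrees simultaneously — this requires care because $L$ is only bounded (not bounded below), so I would use a finite free resolution of $K$ to reduce the Hom-group to a bounded complex of modules and then commute the flat tensor product past the (finite) cohomology computation.
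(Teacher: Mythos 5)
Your proposal is correct and follows essentially the same route as the paper: the whole lemma is reduced to the flat base-change identity $\operatorname{Hom}_{D^b_{\operatorname{coh}}(X)}(E,F)\otimes_R S \cong \operatorname{Hom}_{D^b_{\operatorname{coh}}(U)}(\mathbb{L}f^\ast E,\mathbb{L}f^\ast F)$ (the paper cites \cite[\href{https://stacks.math.columbia.edu/tag/0A6A}{Tag 0A6A}]{StacksProject} for this rather than re-deriving it from a finite free resolution of $K$), combined with faithful flatness of $R \to S$ when $f$ is surjective. Your detour through the $(\mathbb{L}f^\ast,\mathbb{R}f_\ast)$ adjunction in the forward direction is unnecessary once you have that identity, and you correctly flag that the naive cohomology-level argument for part (2) fails, landing on the same module-level argument the paper uses.
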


\begin{proof}
    Let $f$ correspond to the ring map $R \to A$. Then everything follows from the fact that for $E,F \in D^b_{\operatorname{coh}}(X),$ the $R$-module $\operatorname{Hom}(E, F)$ is finitely generated and satisfies
    \begin{displaymath}
        \operatorname{Hom}_{D^b_{\operatorname{coh}}(X)}(E, F) \otimes_R A = \operatorname{Hom}_{D^b_{\operatorname{coh}}(U)}(\mathbb{L}f^\ast E, \mathbb{L}f^\ast F),
    \end{displaymath}
    see \cite[\href{https://stacks.math.columbia.edu/tag/0A6A}{Tag 0A6A}]{StacksProject}.
\end{proof}

\begin{lemma}\label{lem:ghostetale}
    Let $f \colon U \to X$ be an \'{e}tale morphism of Noetherian affine schemes. Given an object $G$ of $D^b_{\operatorname{coh}}( U)$, there exists an object $H$ of $D^b_{\operatorname{coh}}( X)$ with the following property: If $\psi \colon K \to L$ is an $H$-ghost map in $D^b_{\operatorname{coh}}( X)$, then $\mathbb{L}f^\ast(\psi)$ is a $G$-ghost map in $D^b_{\operatorname{coh}}(U)$.
\end{lemma}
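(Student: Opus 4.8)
The plan is to reduce the general étale case to the special case handled by Lemma~\ref{lem:ghostlocaletale}, where the morphism admits a factorization $U \xrightarrow{j} \overline{U} \xrightarrow{g} X$ with $g$ finite and $j$ a principal open immersion. The main obstacle is that an arbitrary étale morphism of affine schemes need not itself admit such a factorization globally; however, by a standard structure theorem for étale morphisms (the étale-local structure, or Zariski's main theorem), $U$ can be covered by finitely many principal open subschemes, each of which does.

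First I would invoke the structure of étale morphisms of affine schemes to write $U = \bigcup_{k=1}^m U_k$ as a finite union of principal opens $U_k = D(s_k) \subseteq U$ such that each composite $f_k \colon U_k \to X$ admits a factorization $U_k \xrightarrow{j_k} \overline{U}_k \xrightarrow{g_k} X$ as in \Cref{setup:etale_setup}, with $g_k$ finite and $j_k$ a principal open immersion. This is possible because $f$ is quasi-finite and separated, so by Zariski's main theorem it factors through a finite morphism after passing to an open cover. Let $G_k := G|_{U_k} = \mathbb{L}(\iota_k)^\ast G$ denote the restriction of $G$ to $U_k$, where $\iota_k \colon U_k \hookrightarrow U$ is the open immersion.

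Next, for each $k$ I would apply \Cref{lem:ghostlocaletale} to the factored morphism $f_k$ and the object $G_k$, obtaining an object $H_k$ of $D^b_{\operatorname{coh}}(X)$—namely $H_k = \mathbb{R}(g_k)_\ast \overline{G_k}$ for a suitable lift $\overline{G_k}$—with the property that any $H_k$-ghost map $\psi$ in $D^b_{\operatorname{coh}}(X)$ pulls back to a $G_k$-ghost map $\mathbb{L}f_k^\ast(\psi)$ in $D^b_{\operatorname{coh}}(U_k)$. I then set $H := \bigoplus_{k=1}^m H_k$. If $\psi \colon K \to L$ is $H$-ghost, then it is $H_k$-ghost for every $k$, so each restriction $\mathbb{L}f_k^\ast(\psi) = \mathbb{L}(\iota_k)^\ast \mathbb{L}f^\ast(\psi)$ is $G_k$-ghost on $U_k$.

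Finally I would verify that being a $G$-ghost map can be checked on the open cover $\{U_k\}$. The key point is that a map $\alpha \colon G[n] \to \mathbb{L}f^\ast K$ is zero if and only if its restriction to each $U_k$ vanishes, since $\operatorname{Hom}$-sheaves glue over a Zariski cover and the $U_k$ cover $U$; equivalently, $\operatorname{Hom}_{D^b_{\operatorname{coh}}(U)}(G, -)$ can be detected locally because the $U_k$ are a cover and the relevant Hom-groups are sections of a sheaf over $U$. Concretely, if $\mathbb{L}f^\ast(\psi) \circ \alpha \neq 0$ for some $\alpha \colon G[n] \to \mathbb{L}f^\ast K$, then its restriction to some $U_k$ is nonzero, contradicting that $\mathbb{L}f_k^\ast(\psi)$ is $G_k$-ghost. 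Hence $\mathbb{L}f^\ast(\psi)$ is $G$-ghost, completing the argument. The step requiring the most care is the reduction to the factored case and the compatibility of the pullback objects $G_k$ with the restrictions, which I expect to be the main technical obstacle.
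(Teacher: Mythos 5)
Your proposal is correct and follows essentially the same route as the paper: factor $f$ through a finite morphism via Zariski's Main Theorem, cover $U$ by principal opens to which \Cref{lem:ghostlocaletale} applies, and take $H = \bigoplus_k H_k$. The only cosmetic difference is in the final local-to-global step, where you check ghostliness on the cover by hand via locality of $\operatorname{Hom}$, while the paper invokes \Cref{lem:ghostaffinepullback} for the faithfully flat morphism $\coprod_k U_k \to U$; these amount to the same fact.
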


\begin{proof}
    By Zariski's Main Theorem \cite[IV Corollaire 18.12.13]{EGA}, we may factor $f$ as an open immersion $U \to \overline{U}$ followed by a finite morphism $\overline{U} \to X$. Then $\overline{U}$ is affine so there exists a finite open covering $U = \bigcup _i U_i$ such that each $U_i \to \overline{U}$ is a principal open. By \Cref{lem:ghostlocaletale}, our claim is true for the maps $U_i \to X$ so there are objects $H_i \in D^b_{\operatorname{coh}}( X)$ such that $H_i$-ghosts pullback to $G|_{U_i}$-ghosts along $U_i \to X$. From \Cref{lem:ghostaffinepullback} applied to the covering $\coprod_i U_i \to U$, it follows that $H = \bigoplus _i H_i$ fits the bill.
\end{proof}

\begin{proof}[Proof of \Cref{thm:etale_descent_rouquier_dimension}]
    It follows from \Cref{thm:summandofpullpush}(1) that $\dim D^b_{\operatorname{coh}}(X)\geq \dim D^b_{\operatorname{coh}}(U)$, so we check the reverse inequality. Let $d = \operatorname{dim} D^b_{\operatorname{coh}}(U)$ and choose an object $G$ such that $\langle G \rangle _{d+1} = D^b_{\operatorname{coh}}(U)$. Let $H \in D^b_{\operatorname{coh}}(X)$ be as in \Cref{lem:ghostetale}. We claim $\langle H \rangle _{d+1} = D^b_{\operatorname{coh}}(X)$. Our assumption on dualizing complexes lets us appeal to Letz's converse ghost lemma \cite[$\S 2.13$]{Letz:2021}, and so, from \Cref{rmk:coghost_lemma}, it suffices to show a composition of $d+1$ $H$-ghosts in $D^b_{\operatorname{coh}}(X)$ vanishes. By \Cref{lem:ghostetale} and the ghost lemma \cite[$\S 2.13$]{Letz:2021}, the pullback of such a composition to $U$ vanishes. But then by part (2) of \Cref{lem:ghostaffinepullback}, the composition itself vanishes, completing the proof.
\end{proof}

\begin{lemma}
\label{lem:descentforcoh}
    Assume $X = \operatorname{lim}_i X_i$ is a co-filtered limit with flat, affine transition maps of schemes $X_i$. Assume all the $X_i$ and $X$ are Noetherian. Then the category $D^b_{\operatorname{coh}}(X)$ is the colimit of the categories $D^b_{\operatorname{coh}}(X_i)$. 
\end{lemma}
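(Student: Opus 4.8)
The plan is to produce an explicit comparison functor and check it is an equivalence. Write $g_i\colon X\to X_i$ for the projections and $f_{i'i}\colon X_{i'}\to X_i$ for the transition maps (for $i'\geq i$); all of these are affine (a cofiltered limit, resp. composite, of affine maps) and flat, so the pullbacks $\mathbb{L}g_i^\ast=g_i^\ast$ and $\mathbb{L}f_{i'i}^\ast=f_{i'i}^\ast$ are exact, preserve boundedness, and preserve coherence (as the $X_i$ and $X$ are Noetherian). These assemble into a comparison functor $\Phi\colon \operatorname{colim}_i D^b_{\operatorname{coh}}(X_i)\to D^b_{\operatorname{coh}}(X)$ out of the filtered $2$-colimit. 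By the standard recognition criterion for filtered $2$-colimits of categories, $\Phi$ is an equivalence once I verify: \textbf{(ff)} for $E,F\in D^b_{\operatorname{coh}}(X_i)$ the canonical map $\operatorname{colim}_{i'\geq i}\operatorname{Hom}_{X_{i'}}(f_{i'i}^\ast E,f_{i'i}^\ast F)\to \operatorname{Hom}_{X}(g_i^\ast E,g_i^\ast F)$ is bijective; and \textbf{(es)} every object of $D^b_{\operatorname{coh}}(X)$ is isomorphic to $g_i^\ast E_i$ for some $i$ and $E_i\in D^b_{\operatorname{coh}}(X_i)$.

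For \textbf{(ff)} I would compute the derived category Hom as a hypercohomology group, $\operatorname{Hom}_{X}(g_i^\ast E,g_i^\ast F)=\mathbb{H}^0\bigl(X,R\mathcal{H}om_X(g_i^\ast E,g_i^\ast F)\bigr)$, and invoke flat base change for the internal hom. Since $E$ is pseudo-coherent and $g_i$ is flat, this rewrites the internal hom as $g_i^\ast R\mathcal{H}om_{X_i}(E,F)$, an object of $D^{+}_{\operatorname{coh}}(X_i)$ (bounded below, coherent cohomology). Continuity of coherent cohomology along the limit $X=\lim_i X_i$ (EGA IV, \S 8, or the Stacks limit results) then identifies each $H^p\bigl(X,g_i^\ast\mathcal{E}xt^q\bigr)$ with $\operatorname{colim}_{i'}H^p\bigl(X_{i'},f_{i'i}^\ast\mathcal{E}xt^q\bigr)$. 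Feeding this into the local-to-global Ext spectral sequence, one notes that only finitely many terms $E_2^{p,-p}$ contribute to the abutment in degree $0$ (because $\mathcal{E}xt^q$ vanishes below some $q_0$ and $H^p$ vanishes for $p<0$), so there is no convergence issue even when $X$ is infinite-dimensional; exactness of filtered colimits finishes the identification. As an alternative one reduces to the affine statement \cite[Tag~0A6A]{StacksProject} (already used in \Cref{lem:ghostaffinepullback}) via a finite affine cover and a \v{C}ech argument, combined with $M\otimes_{R_i}\bigl(\operatorname{colim}_{i'}R_{i'}\bigr)=\operatorname{colim}_{i'}\bigl(M\otimes_{R_i}R_{i'}\bigr)$.

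For \textbf{(es)} I would induct on the amplitude of $E$, reusing \textbf{(ff)}. Coherent sheaves descend along the limit: every coherent (hence finitely presented) sheaf on $X$ is $g_i^\ast$ of a coherent sheaf on some $X_i$ (EGA IV, 8.5, or Stacks), which settles amplitude $0$. For the inductive step, consider the truncation triangle $\tau_{\leq M-1}E\to E\to H^M(E)[-M]\to (\tau_{\leq M-1}E)[1]$; the two outer terms descend to a common stage $X_i$ by the inductive hypothesis and the base case, and by \textbf{(ff)} the connecting map $H^M(E)[-M]\to(\tau_{\leq M-1}E)[1]$ descends to a morphism at some later stage $X_{i'}$. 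The cone of that descended morphism is an object $E_{i'}\in D^b_{\operatorname{coh}}(X_{i'})$ whose pullback sits in a triangle with the same two vertices and the same connecting map as the defining triangle of $E$, so $g_{i'}^\ast E_{i'}\cong E$ by the triangulated axioms. (Alternatively, one can descend a perfect complex $P$ with $E=\tau_{\geq a}P$ supplied by Lipman--Neeman approximation \cite[Theorem 4.1]{Lipman/Neeman:2007}, using that flat pullback commutes with truncation together with descent of perfect complexes along the limit.)

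The main obstacle is part \textbf{(ff)}: once the Hom-continuity is in place, \textbf{(es)} is essentially formal, relying only on classical coherent-sheaf descent, exactness of filtered colimits, and triangulated bookkeeping. The delicate points in \textbf{(ff)} are justifying flat base change for the \emph{unbounded-above} internal $R\mathcal{H}om$ and correctly handling the convergence of the local-to-global spectral sequence; I expect no genuine difficulty here beyond carefully assembling the standard limit theorems.
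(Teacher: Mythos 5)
Your argument is correct, but it takes a more self-contained route than the paper on both halves. For full faithfulness the paper simply cites the limit result \cite[\href{https://stacks.math.columbia.edu/tag/09RE}{Tag 09RE}]{StacksProject}, whereas you essentially re-prove that result via flat base change for $R\mathcal{H}om$, continuity of quasi-coherent cohomology along the limit, and the local-to-global Ext spectral sequence; this works, with the one caveat that convergence and passage to the colimit of spectral sequences needs the cohomological dimension of the $X_{i'}$ to be uniformly finite, which holds because finite affine covers can be pulled back from a fixed stage (your "alternative" reduction to \cite[\href{https://stacks.math.columbia.edu/tag/0A6A}{Tag 0A6A}]{StacksProject} via a \v{C}ech argument is the cleaner way to package this). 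For essential surjectivity the paper's argument is exactly your parenthetical alternative: write $E = \tau_{\geq a}P$ with $P$ perfect using \cite[Theorem 4.1]{Lipman/Neeman:2007}, descend $P$ by \cite[\href{https://stacks.math.columbia.edu/tag/09RF}{Tag 09RF}]{StacksProject}, and use flatness to commute pullback with truncation; your main route instead runs a dévissage on amplitude, descending the top cohomology sheaf and the truncation triangle and using (ff) to descend the connecting map. Both are valid: your dévissage avoids the perfect-complex approximation machinery entirely and only needs classical descent of coherent sheaves, while the paper's version is a three-line deduction from existing limit theorems and fits the approximation-by-perfect-complexes theme of the rest of the paper.
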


Here ``colimit" means 2-colimit in the 2-category of categories. 
More explicitly, the Lemma says that every object of 
$D^b_{\operatorname{coh}}(X)$ is isomorphic to the derived pullback 
of an object of $D^b_{\operatorname{coh}}(X_i)$ for some $i$, and if 
$K_i, L_i \in D^b_{\operatorname{coh}}(X)$, then
$$
\operatorname{Hom}_{D^b_{\operatorname{coh}}(X)}(K, L) = \operatorname{colim}_{j \geq i}\operatorname{Hom}_{D^b_{\operatorname{coh}}(X_j)} (K_j, L_j),
$$
where $K, L$ (resp. $K_j, L_j$) are the pullbacks of $K_i, L_i$ to $X$ (resp. $X_j$).

\begin{proof}
    Derived pullback along the morphisms $X \to X_i$ induces a functor from the colimit category to $D^b_{\operatorname{coh}}(X)$. To show the functor is essentially surjective, we have to show every $E \in D^b_{\operatorname{coh}}(X)$ is a pullback from $X_i$ for $i \gg  0$. We may write $E = \tau_{\geq a}P$ for some perfect complex $P$ on $X$ and integer $a$. Then by \cite[\href{https://stacks.math.columbia.edu/tag/09RF}{Tag 09RF}]{StacksProject}, $P$ is the pullback of a perfect complex $P_i$ on $X_i$ for $i \gg 0$. Then since $X \to X_i$ is flat, $E$ is the pullback of the complex $\tau_{\geq a}P_i$ on $X_i$. Fully faithfulness follows from \cite[\href{https://stacks.math.columbia.edu/tag/09RE}{Tag 09RE}]{StacksProject}.
\end{proof}

\begin{corollary}
\label{cor:hensel}
    Let $R$ Noetherian local ring and let $S$ denote its Henselization or strict Henselization. Then 
    \begin{displaymath}
        \operatorname{dim}(D^b_{\operatorname{coh}}(S)) \leq \operatorname{dim}(D^b_{\operatorname{coh}}(R)),
    \end{displaymath}
    and equality holds if $R$ admits a dualizing complex. 
\end{corollary}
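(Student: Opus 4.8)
The plan is to realize $S$ as a cofiltered limit of étale schemes over $X = \operatorname{Spec} R$ and feed this into the machinery already developed. Writing $(R,\mathfrak{m})$ for the local ring, the (strict) Henselization is a filtered colimit $S = \operatorname{colim}_i R_i$ of étale $R$-algebras whose transition maps are again étale (hence flat, affine, of finite type), and all the rings are Noetherian. Thus $\operatorname{Spec} S = \lim_i \operatorname{Spec} R_i$ with flat affine transition maps, so \Cref{lem:descentforcoh} applies: every object of $D^b_{\operatorname{coh}}(S)$ is the derived pullback of an object from some finite stage $D^b_{\operatorname{coh}}(R_i)$, and Hom-groups on $S$ are the filtered colimit of the Hom-groups at the finite stages. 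I write $f\colon \operatorname{Spec} S \to \operatorname{Spec} R$, $f_i \colon \operatorname{Spec} R_i \to \operatorname{Spec} R$, and $g_i\colon \operatorname{Spec} S \to \operatorname{Spec} R_i$ for the structure maps, so $f = f_i\circ g_i$; note that $f$ is faithfully flat, being a flat local homomorphism of local rings.

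For the inequality $\dim D^b_{\operatorname{coh}}(S)\le \dim D^b_{\operatorname{coh}}(R)=:d$, I choose $G \in D^b_{\operatorname{coh}}(R)$ with $\langle G\rangle_{d+1}=D^b_{\operatorname{coh}}(R)$ and claim $\langle \mathbb{L}f^\ast G\rangle_{d+1}=D^b_{\operatorname{coh}}(S)$. Given $E\in D^b_{\operatorname{coh}}(S)$, descent writes $E=\mathbb{L}g_i^\ast E_i$ for some $E_i\in D^b_{\operatorname{coh}}(R_i)$. Since $f_i$ is a quasi-compact separated étale morphism, \Cref{thm:summandofpullpush}(1) exhibits $E_i$ as a retract of $\mathbb{L}f_i^\ast M$ for some $M\in D^b_{\operatorname{coh}}(R)$; pulling back along $g_i$ shows $E$ is a retract of $\mathbb{L}f^\ast M$. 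As $M\in\langle G\rangle_{d+1}$, and $\mathbb{L}f^\ast$ is triangulated while $\langle\,\cdot\,\rangle_{d+1}$ is closed under retracts, this places $E$ in $\langle\mathbb{L}f^\ast G\rangle_{d+1}$, proving the inequality.

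For the reverse inequality under the dualizing-complex hypothesis, set $e=\dim D^b_{\operatorname{coh}}(S)$ and pick $G\in D^b_{\operatorname{coh}}(S)$ with $\langle G\rangle_{e+1}=D^b_{\operatorname{coh}}(S)$. By descent $G=\mathbb{L}g_{i_0}^\ast G_{i_0}$ for some $G_{i_0}\in D^b_{\operatorname{coh}}(R_{i_0})$. Applying \Cref{lem:ghostetale} to the étale morphism $f_{i_0}$ and the object $G_{i_0}$ produces $H\in D^b_{\operatorname{coh}}(R)$ whose $H$-ghosts pull back along $f_{i_0}$ to $G_{i_0}$-ghosts; composing with the flat pullback along $g_{i_0}$ and \Cref{lem:ghostaffinepullback}(1), I get that every $H$-ghost in $D^b_{\operatorname{coh}}(R)$ pulls back along $f$ to a $G$-ghost in $D^b_{\operatorname{coh}}(S)$. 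Hence any composition of $e+1$ $H$-ghosts in $D^b_{\operatorname{coh}}(R)$ pulls back to a composition of $e+1$ $G$-ghosts on $S$, which vanishes by the ghost lemma since $\langle G\rangle_{e+1}=D^b_{\operatorname{coh}}(S)$. Because $f$ is surjective, \Cref{lem:ghostaffinepullback}(2) reflects this vanishing back to $D^b_{\operatorname{coh}}(R)$, so every $(e+1)$-fold $H$-ghost there vanishes. The converse ghost lemma on $X=\operatorname{Spec} R$, available since $R$ admits a dualizing complex (see \Cref{rmk:coghost_lemma}), then upgrades this to $\langle H\rangle_{e+1}=D^b_{\operatorname{coh}}(R)$, giving $\dim D^b_{\operatorname{coh}}(R)\le e$ and hence equality.

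The main obstacle is that $f\colon\operatorname{Spec} S\to\operatorname{Spec} R$ is not itself étale (only pro-étale), so neither \Cref{thm:summandofpullpush}(1) nor the ghost construction \Cref{lem:ghostetale} applies to it directly. The resolution in both directions is to descend the relevant object to a finite étale stage $R_i$ and then propagate along the flat map $g_i$ back up to $S$, exploiting that ghost maps behave well under flat base change. A secondary point I should verify is that the generation statement need only be tested against ghost maps—a single cohomological vanishing condition—rather than against all objects simultaneously; this is precisely what lets me bypass a delicate ``spreading out of generation'' over the limit, and the converse ghost lemma on $R$ is what makes that reduction legitimate.
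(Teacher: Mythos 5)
Your proposal is correct and takes essentially the same route as the paper: both directions descend the relevant object to a finite étale stage $R_i$ via \Cref{lem:descentforcoh}, use \Cref{thm:summandofpullpush}(1) to get the retract statement for the inequality, and combine \Cref{lem:ghostetale} with \Cref{lem:ghostaffinepullback} and the (converse) ghost lemma for the reverse inequality. The only cosmetic difference is that you make the generator $\mathbb{L}f^\ast G$ explicit in the first half, where the paper simply records that every object of $D^b_{\operatorname{coh}}(S)$ is a direct summand of a pullback.
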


\begin{proof}
    Note that Henselization of our Noetherian local ring remains Noetherian, see \cite[\href{https://stacks.math.columbia.edu/tag/06LJ}{Tag 06LJ}]{StacksProject}. We will show this more generally for any local homomorphism $R \to S$ of Noetherian local rings which is a filtered colimit $S = \operatorname{colim}_i S_i$ of \'etale ring maps $R \to S_i$. To show the inequality, we will show every $K \in D^b_{\operatorname{coh}}(S)$ is a direct summand of $L \otimes _R ^{\mathbb{L}} S$ for some object $L \in D^b_{\operatorname{coh}}(R)$. So let $K \in D^b_{\operatorname{coh}}(S)$. Then by \Cref{lem:descentforcoh}, there exists an $i$ and on object $K_i \in D^b_{\operatorname{coh}}(S_i)$ such that $K_i \otimes _{S_i} ^{\mathbb{L}} S \cong K$, and then by \Cref{thm:summandofpullpush} there is an object $L \in D^b_{\operatorname{coh}}(R)$ such that $K_i$ is a direct summand of $L \otimes _R ^{\mathbb{L}} S_i$. But then $K$ is a direct summand of $L \otimes _R ^{\mathbb{L}} S$.

    Now assume $R$ admits a dualizing complex. This case allows us to appeal to Letz's converse ghost lemma \cite[$\S 2.13$]{Letz:2021} and \Cref{rmk:coghost_lemma}. We will show the reverse inequality. Let $d = \operatorname{dim}(D^b_{\operatorname{coh}}(S))$ and choose $G \in D^b_{\operatorname{coh}}(S)$ such that $D^b_{\operatorname{coh}}(S) = \langle G \rangle _{d+1}$. Then by \Cref{lem:descentforcoh} again, there exists an $i$ and an object $G_i \in D^b_{\operatorname{coh}}(S_i)$ such that $G_i \otimes _{S_i} ^{\mathbb{L}} S  \cong G$. By \Cref{lem:ghostetale}, there exists an object $H \in D^b_{\operatorname{coh}}(R)$ such the base change of an $H$-ghost along $R \to S_i$ is a $G_i$-ghost. But then by \Cref{lem:ghostaffinepullback}, the base change of an $H$-ghost along $R \to S$ is a $G$-ghost, and we conclude as in the proof of \Cref{thm:etale_descent_rouquier_dimension}.
\end{proof}

\begin{corollary} 
\label{cor:nodal}
    Let $X$ be an affine, purely one-dimensional reduced scheme of finite type over an algebraically closed field $k$. Assume the singularities of $X$ are at worst nodal. Then $\operatorname{dim}D^b_{\operatorname{coh}}(X) = 1$. 
\end{corollary}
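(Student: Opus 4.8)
The plan is to prove the two inequalities $\dim D^b_{\operatorname{coh}}(X)\ge 1$ and $\dim D^b_{\operatorname{coh}}(X)\le 1$ separately. The lower bound is soft: if $\dim D^b_{\operatorname{coh}}(X)=0$, then $D^b_{\operatorname{coh}}(X)=\operatorname{add}(G)$ for a single object $G$, so up to shift there are only finitely many indecomposable objects. But $X$ is one-dimensional, hence has infinitely many closed points, and the skyscraper sheaves $\kappa(x)$ are pairwise non-isomorphic indecomposable objects concentrated in degree zero. This contradiction gives $\dim D^b_{\operatorname{coh}}(X)\ge 1$.

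For the upper bound I would first reduce to the single standard node $C_0=\operatorname{Spec} k[x,y]/(xy)$. Since $k$ is algebraically closed and the singularities of $X$ are nodal, each node of $X$ admits an affine \'etale neighbourhood which is itself \'etale over $C_0$, and each smooth point admits an affine \'etale neighbourhood \'etale over $\mathbb A^1$. Choosing finitely many such neighbourhoods whose images cover $X$ and letting $U$ be their disjoint union, the map $U\to X$ is a surjective \'etale morphism of Noetherian affine schemes; as $X$ is of finite type over a field it admits a dualizing complex, so \Cref{thm:etale_descent_rouquier_dimension} yields $\dim D^b_{\operatorname{coh}}(U)=\dim D^b_{\operatorname{coh}}(X)$. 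On the other hand each component of $U$ is \'etale over $C_0$ or over $\mathbb A^1$, so \Cref{thm:summandofpullpush}(1) bounds its Rouquier dimension by $\dim D^b_{\operatorname{coh}}(C_0)$, respectively by $\dim D^b_{\operatorname{coh}}(\mathbb A^1)=1$ (\Cref{ex:strong_generator_examples}(4)). Hence $\dim D^b_{\operatorname{coh}}(X)=\dim D^b_{\operatorname{coh}}(U)\le\max\{\dim D^b_{\operatorname{coh}}(C_0),1\}$, and everything comes down to proving $\dim D^b_{\operatorname{coh}}(C_0)\le 1$.

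To handle $C_0$ I would apply \Cref{thm:generatingperf}, which reduces the problem to building every perfect complex on $C_0$ in two cones from a fixed generator. Writing $C_0=L_x\cup L_y$ as the union of its two smooth branches meeting at the node $p$, and taking $G=\mathcal O_{L_x}\oplus\mathcal O_{L_y}\oplus\kappa(p)$, the Mayer--Vietoris triangle $\mathcal O_{C_0}\to\mathcal O_{L_x}\oplus\mathcal O_{L_y}\to\kappa(p)\to\mathcal O_{C_0}[1]$ tensored with a perfect complex $P$ expresses $P$ through the derived restrictions of $P$ to $L_x$ and $L_y$ (perfect on the smooth lines, hence in $\langle\mathcal O_{L_x}\rangle_2$ and $\langle\mathcal O_{L_y}\rangle_2$) and to $p$ (a finite complex of vector spaces, hence in $\operatorname{add}(\kappa(p))$). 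Naive cone-counting only gives $P\in\langle G\rangle_3$, i.e. the bound $\le 2$ already furnished by \Cref{cor:sharp_bound_rouquier_dimension_curves}. The main obstacle is shaving off this last cone: I would instead argue through the converse ghost lemma (\Cref{rmk:coghost_lemma}), which applies since $C_0$ is affine with a dualizing complex, and show that any composition of two $G$-ghosts vanishes. The point is that restricting a $G$-ghost to either branch produces an $\mathcal O_L$-ghost on a smooth curve of Rouquier dimension one, while the behaviour at the node is absorbed by the summand $\kappa(p)$; organizing these two facts so that a genuine two-fold ghost dies --- rather than a three-fold one --- is exactly the delicate step, and is where the special structure of the node (equivalently, its finite Cohen--Macaulay representation type) must be used.
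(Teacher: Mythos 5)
Your reduction steps match the paper's: both arguments use Artin approximation to produce a surjective \'{e}tale cover of $X$ by a disjoint union of schemes that are \'{e}tale over a regular affine curve or over a standard node, then apply \Cref{thm:etale_descent_rouquier_dimension} to descend and \Cref{thm:summandofpullpush}(1) to bound the pieces. The problem is that your argument bottoms out at the claim $\dim D^b_{\operatorname{coh}}(C_0) \leq 1$ for $C_0 = \operatorname{Spec} k[x,y]/(xy)$, and you do not prove it: your Mayer--Vietoris computation only recovers the bound $\leq 2$ already available from \Cref{cor:sharp_bound_rouquier_dimension_curves}, and you explicitly defer the step of shaving off the last cone (``organizing these two facts so that a genuine two-fold ghost dies \ldots is exactly the delicate step''). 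That step is the entire content of the upper bound, and nothing in your sketch indicates how the finite Cohen--Macaulay type of the node would actually force a composition of two $G$-ghosts to vanish. As written, this is a genuine gap. The paper avoids it by taking the local model to be the \emph{projective} curve $C = V(T_0T_1) \subset \mathbf{P}^2_k$, for which Burban and Drozd have computed the Rouquier dimension to be $1$ (\cite[Corollary 3.11]{Burban/Drozd:2017a} in the paper's citation), and then pulling that bound back along the \'{e}tale map $U \to C$ via \Cref{thm:summandofpullpush}(1). If you want to keep your affine model, the cheapest repair is to observe that $C_0$ is a principal open of $C$, so the open immersion $C_0 \hookrightarrow C$ is quasi-compact separated \'{e}tale and \Cref{thm:summandofpullpush}(1) gives $\dim D^b_{\operatorname{coh}}(C_0) \leq \dim D^b_{\operatorname{coh}}(C) = 1$; but some external input of this kind is needed.

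A smaller issue: your lower bound argues that $D^b_{\operatorname{coh}}(X) = \operatorname{add}(G)$ would force finitely many indecomposables up to shift. For an affine variety the category $D^b_{\operatorname{coh}}(X)$ is not Krull--Schmidt (endomorphism rings are finite modules over the coordinate ring, not local), so ``retract of a finite sum of shifts of $G$'' does not immediately bound the indecomposables appearing. The paper simply cites \cite[Proposition 7.16]{Rouquier:2008}, which gives $\dim D^b_{\operatorname{coh}}(X) \geq \dim X$ for reduced separated schemes of finite type over a field; you should do the same rather than rely on a Krull--Schmidt-type argument.
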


\begin{proof}
    We have $\operatorname{dim}D^b_{\operatorname{coh}}(X) \geq 1$ by \cite[Proposition 7.16]{Rouquier:2008}. 
    Now for every singular point $x \in X$, there are an affine scheme $U$ with a closed point $u \in U$ and \'{e}tale morphisms $U \to X$ and $U \to C$, where $C = V(T_0T_1) \subset \mathbf{P}^2_k$, which take $u \mapsto x$ and $u \mapsto (0 : 0 : 1)$. This follows from Artin approximation \cite[Corollary 2.6]{Artin-Algebraic-Approximation} and the definition of a node. Thus $X$ has an \'etale covering by the disjoint union of a regular one dimensional affine scheme and schemes of the form $U$ above, so it suffices by \Cref{thm:etale_descent_rouquier_dimension} to show $U$ has Rouquier dimension $\leq 1$ (this is known for a regular one dimensional affine scheme). But by \cite[Corollary 3.11]{Burban/Drozd:2017}, the scheme $C$ has Rouquier dimension 1, hence $U$ has Rouquier dimension $\leq 1$ by \Cref{thm:summandofpullpush}.
\end{proof}

\bibliographystyle{alpha}
\bibliography{mainbib}

\end{document}